\pgfplotsset{compat=1.16}
\definecolor{bgColor2}{HTML}{F5793A}
\definecolor{dataColor}{HTML}{F5793A}
\definecolor{axisColor}{RGB}{0,0,0}
\definecolor{labelColor}{gray}{0.30}
\definecolor{tickColor}{gray}{0.30}
\definecolor{textColor}{RGB}{0,0,0}
\definecolor{frameColor}{RGB}{255,255,255}
\definecolor{lineColor}{HTML}{0F2080}
\definecolor{fillColor}{RGB}{200,200,200}
\definecolor{outlineColor}{gray}{0.30}
\definecolor{refColor}{RGB}{216,27,96}
\definecolor{lcolor1}{RGB}{216,27,96}
\definecolor{lcolor2}{RGB}{30,136,229}
\definecolor{lcolor3}{RGB}{255,193,7}
\definecolor{lcolor4}{RGB}{0,77,64}
\title{Sequential CI}
\author{Mattias Nordin and Mårten Schultzberg}
\newtheorem*{proof*}{Proof}
\newtheorem{lemma}{Lemma}
\newtheorem{corollary}{Corollary}
\theoremstyle{plain}
\newtheorem{theorem}{Theorem}
\theoremstyle{definition}
\newtheorem{definition}{Definition}
\theoremstyle{remark}
\newtheorem{remark}{Remark}
\newcommand{\var}{\mathbb{V}}
\renewcommand{\Pr}{\mathbb{P}}
\newcommand{\E}{\mathbb{E}}
\newcommand{\ind}{\perp\!\!\!\!\perp}
\newcommand*{\indep}{%
  \mathbin{%
    \mathpalette{\@indep}{}%
  }%
}
\newcommand*{\nindep}{%
  \mathbin{
    \mathpalette{\@indep}{\not}
  }%
}
\newcommand*{\@indep}[2]{%
  \sbox0{$#1\perp\m@th$}
  \sbox2{$#1=$}
  \sbox4{$#1\vcenter{}$}
  \rlap{\copy0}
  \dimen@=\dimexpr\ht2-\ht4-.2pt\relax
  \kern\dimen@
  {#2}%
  \kern\dimen@
  \copy0 
} 
\newcommand{\as}{\text{ a.s.}}
\begin{document}
\title{Precision-based designs for sequential randomized experiments}
\author{Mattias Nordin\footnote{Department of Statistics, Uppsala University. Email: mattias.nordin@statistics.uu.se} and M\aa rten Schultzberg\footnote{Experimentation Platform team, Spotify. Email: mschultzberg@spotify.com}}
\maketitle
\begin{abstract}
    In this paper, we consider an experimental setting where units enter the experiment sequentially. Our goal is to form stopping rules which lead to estimators of treatment effects with a given precision. We propose a fixed-width confidence interval design (FWCID) where the experiment terminates once a pre-specified confidence interval width is achieved. We show that under this design, the difference-in-means estimator is a consistent estimator of the average treatment effect and standard confidence intervals have asymptotic guarantees of coverage and efficiency for several versions of the design. In addition, we propose a version of the design that we call fixed power design (FPD) where a given power is asymptotically guaranteed for a given treatment effect, without the need to specify the variances of the outcomes under treatment or control. In addition, this design also gives a consistent difference-in-means estimator with correct coverage of the corresponding 
 standard confidence interval. We complement our theoretical findings with Monte Carlo simulations where we compare our proposed designs with standard designs in the sequential experiments literature, showing that our designs outperform these designs in several important aspects. We believe our results to be relevant for many experimental settings where units enter sequentially, such as in clinical trials, as well as in online A/B tests used by the tech and e-commerce industry.
\end{abstract}

\section{Introduction}
A key component in the traditional design of randomized experiments is to decide on the sample size. The experimenter may consider many different aspects such as costs, time, ethics and urgency when deciding on the size of the experiment, but it is generally agreed upon that a well-designed experiment should include a thorough power analysis. In an underpowered study, statistically significant effects are generally overestimates of the true effect size \citep{gelman2014}, whereas in a overpowered study, an unnecessary amount of resources are spent in the experiment.

In many experimental settings, units enter into the experiments sequentially. This is, for instance, the case in many clinical trials but also in online A/B testing in the tech and e-commerce industry. In such cases, instead of relying on a fixed-sample design (where the number of observations are decided in advance) the experimenter can ensure that resources are not wasted by continuously monitoring the results and terminate the experiment as soon as results are sufficiently clear to allow a conclusion to be drawn.
To do so in a statistically rigorous way, a decision rule which take into account that results are monitored continuously need to be put in place.
Sequential tests, originating with \cite{wald1945}, have been developed to do just that. For instance, with the error-spending approach of \cite{lan_discrete_1983}, the experimenter decides on how much type I error to spend at any given point such that the overall type I error is set at a given value, for instance at five percent. If, at any given point, the observed statistic exceeds the implied threshold value, the experiment is terminated. Another sequential testing framework that has gained a lot of attention, especially in the online experimentation literature, is the so-called ``always-valid inference'' \citep{johari2015always, johari2022always, howard2021, waudby2021time}, which bounds the false positive rate by any desired $\alpha$ under continuous monitoring, where the sample size is allowed to approach infinity.

The sequential testing approach is useful when the relevant question is of the form ``is the treatment better than the control''. However, for many research questions, it is also of interest how large the difference is. In such cases sequential tests will in general work poorly as the usual difference-in-means estimator is a biased estimator of the average treatment effect when early stopping is allowed \citep{fan2004conditional}. Furthermore, the cost of using sequential tests is that---compared to fixed-sample designs---the power of the test will be lower, because it is necessary to adjust for multiple testing. The lower power is especially acute for always-valid tests who tend to be very conservative. Finally, if the null hypothesis is true, sequential designs will have to run until a final, pre-specified, maximum sample size (or time) is reached, resulting in potentially inefficient use of data (although, sequential designs may also include futility bounds to mitigate this problem)

In this paper we propose an alternative stopping rule to significance-based stopping, where the goal of the design is to achieve a confidence interval of a given pre-specified  width. This width is achieved by continuously forming confidence intervals for the treatment effect and stopping once a targeted width is reached. We call this design a ``fixed-width confidence interval design'' (FWCID) and introduce both a naive and conservative version of the design. We show that both versions results in consistent estimators of the average treatment effect with asymptotically correct coverage of the confidence interval. In addition, the use of data is asymptotically efficient in the sense that the expected sample size is the same as the sample size that would have been chosen in a fixed-sample design to achieve the target confidence interval, had the variances of the outcome under treatment and control been known. Moreover, the design makes it straightforward to---at any time during the experiment---make a forecast of at what sample size the experiment will terminate, something that is useful from a practical perspective.

Finally, we also show how the idea of forming a stopping rule based on the confidence interval width, i.e., a function of the variance of the treatment effect estimator, can be translated into a design for hypothesis testing, which we call a fixed-power design (FPD). The perhaps most difficult aspect of a fixed-sample design when performing a power analysis is to form beliefs about the outcome distribution under both treatment and control. Even though historical outcome data is sometimes available, it is not trivial to estimate the variance of the treatment effect estimator. For instance, in online experiments, the outcome distribution often changes over time due to seasonality and growing and/or changing user bases, and the fact that treatment effects are often heterogeneous. With heterogeneous treatment effects, the variance under no treatment may not be a good proxy for the variance under treatment. With the FPD, there is no need to guess what the outcome distribution will look like in the design stage. Instead, power is assured by letting the final sample size be stochastic. This feature could be undesirable if the experimenter do not want the experiment to run for ``too long''. However, we also show that the FPD admits continuous monitoring of expected sample size at which the experiment will terminate. Therefore, the experimenter can already from an early stage check when the experiment will terminate and take appropriate action (such as terminating the experiment) if the expected sample size is too high. In addition, the fact that sample size is stochastic is a guard against overly optimistic power analyses in fixed-sample designs where it is easy for the experimenter to fall in to the trap of assuming that the variances of the outcome distributions are too small, leading to underpowered studies.

Recent discussions both in the statistics literature and the scientific community at large has criticized the over-reliance on null hypothesis significance testing (e.g., \citealt{wasserstein_asa_2016,wasserstein_moving_2019,gelman_beyond_2014,greenland_for_2017,amrhein_scientists_2019}). Instead, many of these authors have argued that more emphasis should be placed on point estimation and estimates of uncertainty. In this paper, we take exactly such an approach where the two designs we propose, in contrast to much of the sequential testing literature, gives consistent estimators of the average treatment effect together with valid confidence intervals.

The rest of the paper is structured as follows. In Section 2, we introduce the theoretical framework we work in and make some preliminary definitions. In Section 3.1, we introduce the FWCID and prove both consistency and asymptotic efficiency of the naive version of that design. We also introduce the FPD and show how we can use the lessons from the FWCID to construct valid tests with correct power and size. In Section 3.2, we introduce a conservative version of the FWCID and prove consistency and asymptotic efficiency for that version as well. In Section 4, we study small sample behavior in a series of simulation studies where the FWCID and FPD are contrasted with some of the most common sequential designs used today. Finally, Section 5 concludes.

\section{Setup}\label{section:setup}

We consider a randomized experiment with treatment indicator $W\in \{0,1\}$. Let $Y_i(0)$ and $Y_i(1)$ denote the potential outcomes of unit $i$ if untreated and treated, respectively. The potential outcomes are independently and identically distributed across units with $\E(Y(0))=\mu_0$, $\E(Y(1))=\mu_1$, $0<\var(Y(0))=\sigma_0^2<\infty$ and $0<\var(Y(1))=\sigma_1^2<\infty$. 

The target parameter in the experiment is the population average treatment effect (ATE), $\tau = \E(Y(1)) - \E(Y(0))$. We assume that the stable unit treatment value assumption (SUTVA) holds, which means that we get the observed outcome, $Y_i$, as
\begin{equation}
    Y_i = W_i Y_i(1) + (1-W_i) W_i(0).
\end{equation}
We also assume exchangeability, which follows from randomization of $W$:
\begin{equation}
    Y(w) \ind W.
\end{equation}
For a sample of size $n$, the difference-in-means estimator can be written as
\begin{equation}
    \hat\tau_n = \frac{1}{n_1}\sum_{i=1}^n Y_i(1) W_i - \frac{1}{n_0}\sum_{i=1}^n Y_i(0) (1-W_i) = \hat\mu_{1n} - \hat\mu_{0n},
\end{equation}
where $n_0,n_1$ are the number of control and treated units, while $\hat\mu_{0n},\hat\mu_{1n}$ are the sample averages of $Y$ for the two groups. Considering the share treated as fixed, the variance of the difference-in-means estimator can be estimated as 
\begin{equation}
    \hat\sigma_{\hat\tau_n}^2 = \left(\frac{1}{n_1} \hat\sigma^2_{1n} + \frac{1}{n_0}\hat\sigma^2_{0n}\right),
\end{equation}
where $\hat\sigma^2_{wn}$ is the naive variance estimator for treated and non-treated respectively,
\begin{equation}
    \hat\sigma^2_{wn} = \frac{1}{n_w}\sum_{i:W_i=w} \left(Y_i(w)-\hat\mu_{wn}\right)^2.
\end{equation}
We focus on the naive variance estimator instead of the standard unbiased one, since our focus in on asymptotics and it will simplify the math later on. For completeness, we also set $\hat\sigma_{\hat\tau_n}^2=\infty$ if $n_0\leq 1$ and/or $n_1\leq 1$. It will be convenient to rewrite the variance in the following way: 
\begin{align}
    \hat\sigma_{\hat\tau_n}^2 &= \left(\frac{n_0}{n}\hat\sigma^2_{1n} + \frac{n_1}{n}\hat\sigma^2_{0n}\right)\kappa_n \nonumber \\
    &= \hat{\sigma}^2_{\mathcal{P}_n}\kappa_n, \label{eq:var_treatment_estimator}
\end{align}
where $\kappa_n:=1/n_0+1/n_1=n/(n_0n_1)$. In this paper, we focus on so-called \emph{sequential sampling designs}, i.e., designs where the sampling continues until a \emph{stopping rule} criterion is met.  
\begin{definition}
A sequential sampling design is a design where units enter sequentially such that $\lim_{n\rightarrow \infty} n_1/n = p \as$ , for $0<p<1$, and exchangeability and SUTVA holds for any $n\in \mathbb{N}^+$.
\end{definition}
As such, the treatment assignment does not have to be random, but could, for instance, be a design where every second unit goes to treatment and the others to control, as long as exchageablity and SUTVA holds. For any sequential design, the stopping rule, i.e., a rule for when to stop sampling, can be any function of data that satisfies the following rule:
\begin{definition}\label{def:stopping rule}
    A stopping rule $g(f(X_n), c_n)=N$ is a rule for a function $f$, a sequence of random variables $X_n$ and a sequence of constants $c_n$ such that sampling is stopped at sample size $n=N$ iff
    \begin{align}
        &f(X_n) > c_n, \quad \forall n < N, \nonumber \\
        &f(X_N) \leq c_n.
    \end{align}
\end{definition}
Note that $N$ is not the stopping rule, but the stochastic sample size at which sampling is stopped. Rather, the rule is to stop the first time $f(X_n)\leq c_n$.

\section{Fixed confidence interval width designs for ATE estimation}
In this section we define a sequential design that uses the observed confidence interval width as a stopping criterion. Formally,
we consider a fixed-width confidence interval as in Definition \ref{def:fixed_width_ci}.
\begin{definition}\label{def:fixed_width_ci}
    For a target parameter $\theta$ and an estimator $\hat\theta_N$, a fixed-width confidence interval with half-width $d$ and confidence level $1-\alpha$, $I_N$, is an interval
    \begin{equation}
        I_N = [\hat\theta_N - d, \hat\theta_N + d],
    \end{equation}
    such that 
    \begin{equation}
        \Pr(\theta \in I_N) = 1- \alpha.
    \end{equation}
\end{definition}
The difference from fixed-sample confidence intervals is that instead of the estimated variance being stochastic, it is now the sample size that is stochastic.

However, as we show in the following section, asymptotically, we can construct a sequence of fixed-sample confidence intervals and stop once such an interval has half-width smaller than $d$ to construct a valid fixed-width confidence interval. We call such a design a \emph{fixed-width confidence interval design} (FWCID).

\subsection{Asymptotically valid inference under FWCID}
\begin{theorem}
    For the target parameter $\tau$ and the stopping rule $g(\hat\sigma_{\hat\tau_n}^2,d^2/z_{\alpha/2}^2)=N$, with $0<\alpha<1$, and the estimator of the treatment effect $\hat\tau_N$, $I_N$ is an asymptotic fixed-width confidence interval. I.e., it is the case that 
    \begin{equation}
        \lim_{d\rightarrow 0} \Pr(\tau \in I_N) = 1-\alpha
    \end{equation}
    \label{thm:chow_robbins_style}
\end{theorem}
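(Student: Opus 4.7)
The plan is to adapt the classical Chow--Robbins argument to the two-arm difference-in-means estimator. Write $V:=\sigma_1^2/p+\sigma_0^2/(1-p)$ for the limiting scaled variance of $\hat\tau_n$ and $n^\ast(d):=Vz_{\alpha/2}^2/d^2$ for the deterministic ``oracle'' sample size that would produce half-width exactly $d$ had $(\sigma_0^2,\sigma_1^2,p)$ been known. The proof will show (i) $N/n^\ast(d)\to 1$ a.s., (ii) $\sqrt{N}(\hat\tau_N-\tau)\xrightarrow{d}\mathcal N(0,V)$, and (iii) combine the two via Slutsky.

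For (i), since $n_1/n\to p\in(0,1)$ a.s.\ we have $n_0,n_1\to\infty$ a.s., so the SLLN applied to each arm yields $\hat\sigma_{wn}^2\to\sigma_w^2$ a.s.\ for $w=0,1$. Combining this with the rewriting \eqref{eq:var_treatment_estimator} and with $n\kappa_n=n^2/(n_0n_1)\to 1/[p(1-p)]$ gives $n\,\hat\sigma_{\hat\tau_n}^2\to V$ a.s. The stopping-rule inequalities $\hat\sigma_{\hat\tau_{N-1}}^2>d^2/z_{\alpha/2}^2\geq \hat\sigma_{\hat\tau_N}^2$ then sandwich $N/n^\ast(d)$ between two quantities that both converge to $1$, giving $N/n^\ast(d)\to 1$ a.s.\ and in particular $N\to\infty$ a.s.\ as $d\to 0$. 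For (ii), apply Anscombe's random-index CLT separately to each arm: potential outcomes are iid across units with finite variance, the treated and control subsamples come from disjoint index sets (so the two arm-level sample means are exactly independent), and $n_w\to\infty$ a.s.\ at a deterministic rate $n_w/n^\ast(d)\to p_w$ (with $p_1=p$, $p_0=1-p$) by (i). Hence $\sqrt{n_w}(\hat\mu_{wN}-\mu_w)\xrightarrow{d}\mathcal N(0,\sigma_w^2)$ and, since $N/n_w\to 1/p_w$ a.s., combining the two independent limits yields $\sqrt{N}(\hat\tau_N-\tau)\xrightarrow{d}\mathcal N(0,V)$.

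For (iii), write
\[
\Pr(\tau\in I_N)=\Pr\!\left(\frac{\sqrt N\,|\hat\tau_N-\tau|}{\sqrt V}\leq \sqrt{\tfrac{Nd^2}{V}}\right);
\]
the random bound on the right converges a.s.\ to $z_{\alpha/2}$ by (i), while the statistic on the left converges in distribution to $|Z|$ for $Z\sim\mathcal N(0,1)$ by (ii), so Slutsky delivers the coverage limit $\Pr(|Z|\leq z_{\alpha/2})=1-\alpha$. I expect the main obstacle to be the justification of the CLT at the random index $N$: writing the candidate limit is routine, but formally one must verify Anscombe's uniform-continuity-in-probability hypothesis. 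The per-arm reduction above is the cleanest route because it avoids the joint process and keeps us in the classical iid Anscombe setting; an alternative single-average formulation would require a plug-in propensity correction. Everything else is routine SLLN plus Slutsky.
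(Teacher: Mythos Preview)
Your argument is correct and follows essentially the same route as the paper: both rewrite the stopping rule, use the Chow--Robbins sandwich to get $N/n^\ast(d)\to 1$ a.s.\ (the paper's $t$ is exactly your $n^\ast(d)$, and its $s_n$ is your $n\hat\sigma_{\hat\tau_n}^2/V$), invoke Anscombe for the random-index CLT, and finish with Slutsky. Your per-arm decomposition is a more explicit treatment of the Anscombe step than the paper's one-line citation; the only caveat is that ``combining the two independent limits'' needs a \emph{bivariate} Anscombe argument, since the stopped arm means $\hat\mu_{wN}$ are not exactly independent (the random indices $n_w(N)$ depend on both arms through the variance estimate)---what is independent is the pair of underlying partial-sum processes, which is precisely what the vector form of Anscombe requires.
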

\begin{proof}
The proof follows almost directly from \cite{chow_asymptotic_1965} with some minor changes described below. Using Definition \ref{def:stopping rule}, the sample size $N$ is given by the following stopping rule
\begin{align}
   &\hat\sigma_{\hat\tau_n}^2 > d^2/z_{\alpha/2}^2, \quad \forall n < N, \nonumber \\
   &\hat\sigma_{\hat\tau_N}^2 \leq d^2/z_{\alpha/2}^2. \label{eq:stopping_rule}
\end{align}
For ease of notation, let $\sigma^2_\mathcal{P}:=(1-p)\sigma_1^2 + p\sigma_0^2$ and defining $\hat p_n = n_1/n$, we define the following variables:
\begin{align}
    s_n &:= \frac{\hat{\sigma}^2_{\mathcal{P}_n}}{\sigma^2_\mathcal{P}}\frac{p}{\hat p_n}\frac{1-p}{(1-\hat{p}_n)} \label{eq:def_sn} \\
    t &:= \frac{z_{\alpha/2}^2\sigma^2_\mathcal{P}}{d^2p(1-p)}. \label{eq:def_t}
\end{align}

The stopping rule in equation \eqref{eq:stopping_rule} can be rewritten as
\begin{equation}
N = \inf \{ n \in \mathbb{N}^+: s_n \leq n/t \}. \label{eq:def_N_v2}
\end{equation}
Note that $s_n>0\as$ and $\lim_{n\rightarrow\infty}  s_n = 1\as$ It follows that $N$ is a non-decreasing function of $t$ with $\lim_{t\rightarrow\infty} N = \infty\as$ Moreover, $\lim_{t\rightarrow \infty}$ is equivalent to $\lim_ {d\rightarrow 0}$.

By equation \eqref{eq:def_N_v2}, $s_N \leq N/t$ and $s_{N-1} > (N-1)/t$. The second inequality can be rewritten as $s_{N-1} N / (N-1) > N/t$. Hence, we have
\begin{equation}
    s_N \leq \frac{N}{t} < \frac{N}{N-1}s_{N-1}. \label{eq:ineq_dm}
\end{equation}
Because $\lim_{t\rightarrow\infty} N = \infty \as$, it is the case that $\lim_{t\rightarrow \infty} N/(N-1) = 1 \as.$ Finally, because $s_n>0 \as$ and $\lim_{n\rightarrow\infty}  s_n = 1$, from equation \eqref{eq:ineq_dm} it follows that
\begin{equation}
    1 = \lim_{t\rightarrow \infty} \frac{N}{t} \as = \lim_{d\rightarrow 0 }\frac{d^2N}{z_{\alpha/2}^2\sigma^2_\mathcal{P}}p(1-p) \as
    \label{eq:part1}
\end{equation}
We can write the probability that the interval covers $\tau$ as
\begin{align}
    \Pr(\tau \in I_N) &= \Pr(|\hat\tau_N - \tau| < d) \nonumber \\
    &= \Pr\left(\frac{\sqrt{N}\sqrt{p(1-p)}}{\sigma_\mathcal{P}}|\hat\tau_N - \tau| < \frac{\sqrt{N}\sqrt{p(1-p)}}{\sigma_\mathcal{P}}d \right)
    \label{eq:pr_ci}
\end{align}
By equation \eqref{eq:part1}, $\sqrt{N}\sqrt{p(1-p)}d/\sigma_\mathcal{P}$ converges to $z_{\alpha/2}$ in probability and $N/t$ converges to 1 in probability as $t\rightarrow \infty$. By \cite{anscombe_large-sample_1952}, it follows that as $t\rightarrow \infty$, $\sqrt{N}(\hat\tau_N - \tau)\sqrt{p(1-p)}/\sigma_\mathcal{P}\sim \mathcal{N}(0,1)$ Hence, it is the case that, as $t\rightarrow \infty$, $\Pr(\tau \in I_N) = F(-z_{\alpha/2}) - F(z_{\alpha/2})$, which concludes the proof.
\end{proof}
Theorem \ref{thm:chow_robbins_style} says that as we let the target interval width go to zero, the coverage for the CI at the stopping point is the intended $1-\alpha$. Moreover, the difference-in-means estimator is consistent under FWCID, formalized in Corollary \ref{corollary:tauhat_consistent}. 
\begin{corollary}\label{corollary:tauhat_consistent}
Consider a sequential experiment with the same stopping rule as in Theorem \ref{thm:chow_robbins_style}. The difference-in-means estimator is a consistent estimator of the average treatment effect, i.e., $\hat{\tau}_N \xrightarrow{p} \tau \,\,\text{as} \,\, d \rightarrow 0.$
\end{corollary}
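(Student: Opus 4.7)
The plan is to extract the corollary almost for free from ingredients already assembled in the proof of Theorem \ref{thm:chow_robbins_style}. Two facts established there do all the work: first, $N \to \infty$ almost surely as $d \to 0$ (this was noted via $\lim_{t\to\infty} N = \infty$ a.s.\ combined with the equivalence of $t \to \infty$ and $d \to 0$); second, Anscombe's random-sum central limit theorem yields $\sqrt{N}(\hat\tau_N - \tau)\sqrt{p(1-p)}/\sigma_\mathcal{P} \xrightarrow{d} \mathcal{N}(0,1)$ as $d \to 0$. Since convergence in distribution to a fixed distribution implies boundedness in probability, the studentized deviation is $O_p(1)$ along the sequence of designs indexed by $d$.

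The key algebraic step is to isolate $\hat\tau_N - \tau$ by rearranging the studentized expression:
\begin{equation*}
\hat\tau_N - \tau \;=\; \frac{\sigma_\mathcal{P}}{\sqrt{p(1-p)}}\cdot \frac{1}{\sqrt{N}}\cdot Z_d, \qquad Z_d := \frac{\sqrt{N}\sqrt{p(1-p)}}{\sigma_\mathcal{P}}(\hat\tau_N - \tau).
\end{equation*}
By fact one, $1/\sqrt{N} \xrightarrow{\text{a.s.}} 0$ as $d \to 0$, hence also in probability; by fact two, $Z_d = O_p(1)$. The leading constant $\sigma_\mathcal{P}/\sqrt{p(1-p)}$ is finite and deterministic. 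Slutsky's theorem then yields $\hat\tau_N - \tau \xrightarrow{p} 0$, which is precisely the claim.

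The main obstacle, such as it is, is essentially notational: one must be careful that the limit $d \to 0$ acts on a stochastic sample size $N = N(d)$, and that mixing an almost-sure statement about $1/\sqrt{N}$ with a distributional statement about $Z_d$ is justified by Slutsky's theorem (which requires only convergence in probability of one factor and convergence in distribution of the other). All of the genuinely nontrivial work---handling the random stopping time inside a CLT via Anscombe, and verifying that $N/t \to 1$ so that the normalization has the right rate---was already discharged inside the proof of Theorem \ref{thm:chow_robbins_style}, so no additional probabilistic machinery is needed.
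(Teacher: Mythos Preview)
Your proof is correct and takes essentially the same approach as the paper, which simply notes that asymptotic normality of $\hat\tau_N$ (established in Theorem~\ref{thm:chow_robbins_style}) immediately implies consistency, citing a standard reference. Your version spells out explicitly the Slutsky-type argument that underlies this one-line claim.
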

\begin{proof}
By Theorem \ref{thm:chow_robbins_style}, $\hat{\tau}_N$ is asymptotically normal as $d\rightarrow0$. The corollary follows immediately, see e.g. \cite{casella2021statistical}.     
\end{proof}
For readers familiar with the sequential testing literature, both Theorem \ref{thm:chow_robbins_style} and Corollary \ref{corollary:tauhat_consistent} might be somewhat surprising. In the sequential testing literature, the goal is to mitigate inflated false positive rates due to ``peeking'' at the outcome data during the sampling. Moreover, using sequential tests also gives biased point estimators \citep{fan2004conditional}. However, in traditional sequential testing, the treatment effect estimator is directly used in the stopping rule, e.g, stopping when the treatment effect is significantly different from zero. Theorem \ref{thm:chow_robbins_style} and Corollary \ref{corollary:tauhat_consistent} instead says that using information about the variance of the estimator as basis for a stopping rule, as opposed to the ATE estimator itself, does not affect the coverage and the estimator remains consistent. The result directly implies that the false positive rate of the corresponding test (i.e., whether the interval covers the null or not) is not inflated, something that we discuss further in Section \ref{sec:hypothesis_testing}.

\begin{remark}\label{rem:as_eff}
FWCID is asymptotically efficient in the sense that the asymptotic sample size is the same for FWCID as for a fixed-sample design. To see this, let $d_f$ be the half-width of a fixed-sample confidence interval. It is the case that
\begin{equation}
    d_f = z_{\alpha/2} \sqrt{\frac{\hat\sigma^2_{\mathcal{P}_n}}{n\hat{p}_n(1-\hat{p}_n)}},
\end{equation}
which means that
\begin{equation}
    \lim_{n\rightarrow \infty} \frac{d_f^2 n}{z_{\alpha/2}\sigma^2_\mathcal{P}}p(1-p) = 1 \as
\end{equation}
Hence, by equation \eqref{eq:part1}, it is the case that
\begin{equation}
    \lim_{n\rightarrow \infty} d_f^2 n = \lim_{d\rightarrow 0 }d^2N \as.
\end{equation}
This result says that, asymptotically, the number of observations needed for a given confidence interval width is the same for FWCID as for a standard fixed-sample design.\footnote{In fact, it is also possible to show asymptotic efficiency in the sense of \citealt{chow_asymptotic_1965}, which is that $\lim_{d\rightarrow 0} \frac{d^2\E(N)}{a^2\sigma^2_\mathcal{P}}p(1-p)=1$. We omit a proof of this result here, but it follows directly from \citealt{chow_asymptotic_1965}}. 
\end{remark}

\begin{remark} \label{rem:equiv_stopping_rules}
    The stopping rule $g(\hat\sigma_{\hat\tau_n}^2,d^2/z_{\alpha/2}^2)=N$ in Theorem \ref{thm:chow_robbins_style} implies that sampling is stopped when the variance of the treatment effect is sufficiently small. By noting that $\hat\sigma_{\hat\tau_n}^2 = \hat{\sigma}^2_{\mathcal{P}_n} /(n\hat{p}_n(1-\hat{p}_n))$, the stopping rule can be rewritten as
    \begin{equation}
        g\left(\frac{\hat{\sigma}^2_{\mathcal{P}_n} z_{\alpha/2}^2}{d^2\hat{p}_n(1-\hat{p}_n)},n\right)=N. \label{eq:alt_stopping_rule}
    \end{equation}
\end{remark}
The advantage of writing the stopping rule this way is that the first argument provides an estimate of when sampling will be stopped. I.e., let $\hat{N}_n$ be the estimated required sample size to get a confidence interval with half-width $d$, we have
\begin{equation}
    \hat{N}_n = \frac{\hat{\sigma}^2_{\mathcal{P}_n} z_{\alpha/2}^2}{d^2\hat{p}_n(1-\hat{p}_n)}.
\end{equation}
For most designs, $p$ would be known, in which case, $\hat{p}$ could be replaced with $p$ (something that would mainly be useful early on in the experiment when $\hat{p}$ is likely to be volatile).

The two equivalent stopping rules are illustrated in Figure \ref{fig:stop_rule_illustration} for some randomly generated data in a Bernoulli trial. In the top diagram, the stopping rule in Theorem \ref{thm:chow_robbins_style} is illustrated where $\hat\sigma^2_{\hat\tau_n}$ is generally decreasing, with the decreases becoming smoother over time as each individual observation becomes relatively less important. In the bottom diagram, the alternative stopping rule in equation \eqref{eq:alt_stopping_rule} is instead illustrated. Here the expected sample size at which the experiment is stopped does not trend downwards, but is converging towards a finite value.

\begin{figure}
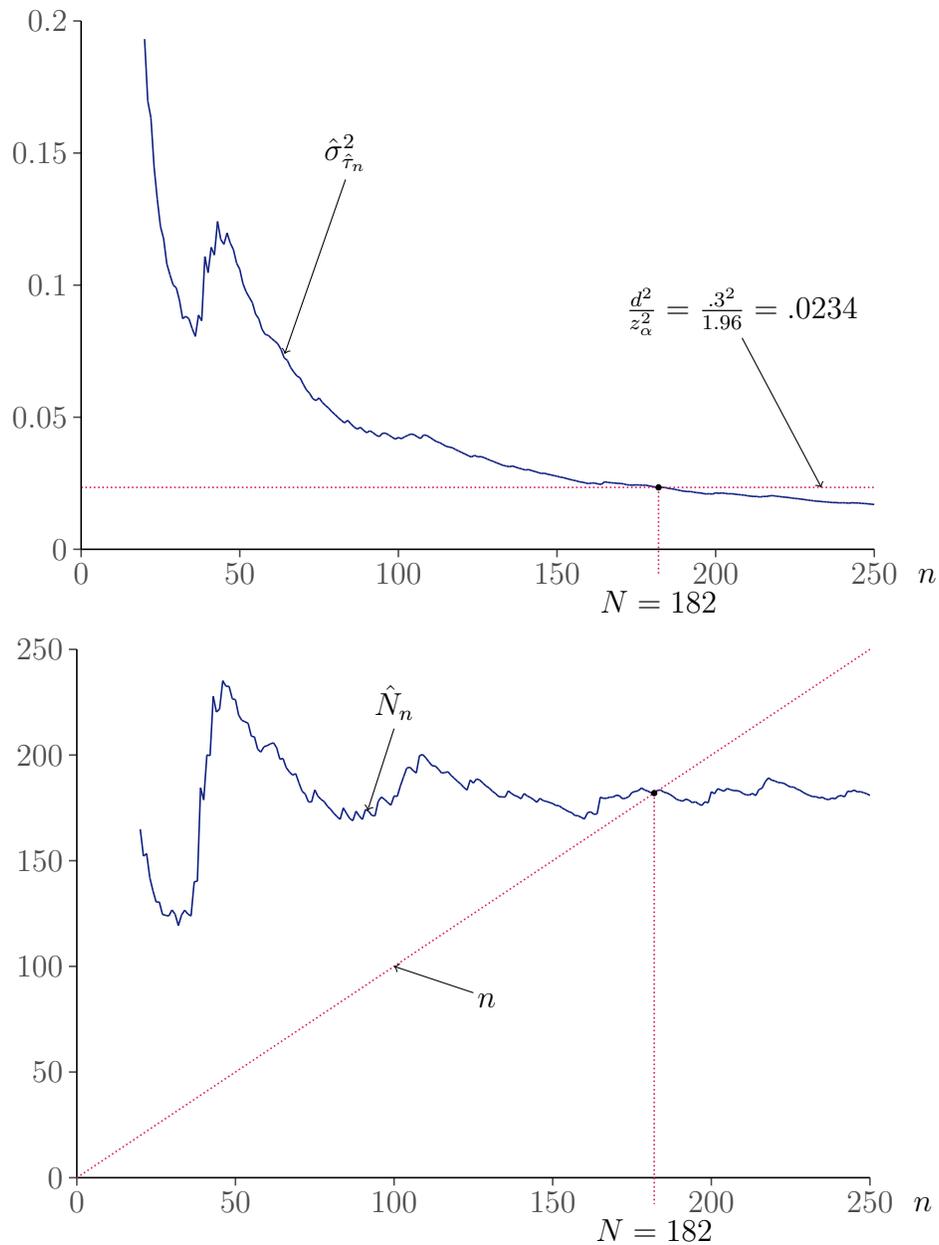

    \centering
    \input{figures/fig_stop_rule1} \\
    \input{figures/fig_stop_rule2} \\
    \caption{Illustration of stopping rules}
    \label{fig:stop_rule_illustration}
    \floatfoot{Note: The figure illustrates the two equivalent stopping rules $g(\hat\sigma_{\hat\tau_n}^2,d^2/z_{\alpha/2}^2)=N$ (top diagram) and $ g((\hat{\sigma}^2_{\mathcal{P}_n} z_{\alpha/2}^2)/(d^2\hat{p}_n(1-\hat{p}_n)),n)=N$ (bottom diagram). Both $Y(0)$ and $Y(1)$ have been drawn from a standard normal distribution, and treatment assignment is determined in a Bernoulli trial with $p=.5$. The half-width of the confidence interval is set at $d=.3$ and the confidence level is 95\%.}
\end{figure}

\begin{remark}\label{rem:unbiasedness}
    In the special case where $\E(\hat{p}_n)=1/2$ and constant treatment effects (i.e., $Y_i(1)-Y_i(0)=\tau$ for all $i$) and $\hat{p}_n \indep Y_i(w)$, it is the case that $\E(\hat\tau_N)=\tau$, i.e., the treatment effect estimator is unbiased.
\end{remark}
A sufficient condition for $\E(\hat\tau_N)=\tau$ is that $\E(\hat\tau_n|\hat\sigma_{\hat\tau_n}^2) =  \E(\hat\tau_n) = \tau$ for all $n$. We have
\begin{align}
    \E(\hat\tau_n|\hat\sigma_{\hat\tau_n}^2) &= \E\left(\hat\tau_n|\hat\sigma_{\hat\tau_n}^2\right) \nonumber \\
    &=\E\left(\hat\mu_{1n}-\hat\mu_{0n}|\hat\sigma_{\hat\tau_n}^2\right)  \nonumber
    \\
    &=\E\left(\hat\mu_{1n}|\hat\sigma_{\hat\tau_n}^2\right) -\E\left(\hat\mu_{0n}|\hat\sigma_{\hat\tau_n}^2\right) \nonumber \\
    &=\E\left(\hat\mu_{1n}\bigg|\frac{1}{\hat{p}_n n}\hat\sigma^2_{1n} + \frac{1}{(1-\hat{p}_n) n}\hat\sigma^2_{0n}\right) -\E\left(\hat\mu_{0n}\bigg|\frac{1}{\hat{p}_n n}\hat\sigma^2_{1n} + \frac{1}{(1-\hat{p}_n) n}\hat\sigma^2_{0n}\right)
     \nonumber \\
    &=\E\left(\hat\mu_{1n}\bigg|\frac{1}{\hat{p}_n n}\hat\sigma^2_{1n} \right) -\E\left(\hat\mu_{0n}\bigg|\frac{1}{(1-\hat{p}_n) n}\hat\sigma^2_{0n}\right)
    \nonumber \\
    &=\E\left(\hat\mu_{1n}\bigg|\frac{1}{\hat{p}_n n}\hat\sigma^2_{1n} \right) -\E\left(\hat\mu_{0n} + \tau \bigg|\frac{1}{(1-\hat{p}_n) n}\hat\sigma^2_{0n}\right) + \tau.
\end{align}
Because of the constant treatment effect and the fact that $\E(\hat{p}_n)=1/2$, the  conditional distribution of $\hat\mu_{1n}|(1/(\hat{p}_n n)\hat\sigma^2_{1n})$ equals the conditional distribution of $\hat\mu_{0n}|(1/((1-\hat{p}_n) n))\hat\sigma^2_{0n}$, and we have $\E(\hat\tau_n|\hat\sigma_{\hat\tau_n}^2)=\tau$ for all $n$.

\subsubsection{Implications for hypothesis testing \label{sec:hypothesis_testing}}

Theorem \ref{thm:chow_robbins_style} implies that the experimenter decides on a confidence half-width, $d$, together with a confidence level, $\alpha$, and let the experiment run until a the stopping rule is satisfied. Alternatively, instead of deciding on a confidence width, the experimenter may consider a null hypothesis, $\tau = \tau_{H_0}$ and a hypothesized alternative, $\tau_{H_1}$ for which a given power, $1-\beta$, should be achieved. For a one-sided hypothesis test, we want to find a stopping rule such that the null is rejected with probability $\alpha$ if $\tau = \tau_{H_0}$ and with probability $1-\beta$ if $\tau = \tau_{H_1}$. We call such a design a \emph{fixed-power design} (FPD). We formalize this design in Corollary \ref{cor:equiv_d_power_stopping_rule}.

\begin{corollary}\label{cor:equiv_d_power_stopping_rule}
    For a null hypothesis of $\tau_{H_0}$ and an alternative $\tau_{H_1}$, it is the case that for $\tau_{d}:=\tau_{H_1}-\tau_{H_0}$, the stopping rule $g(\hat\sigma_{\hat\tau_n}^2,\tau_{d}^2/(z_{\alpha}+z_\beta)^2)=N$ implies that
    \begin{align}
    &\lim_{\tau_{d} \rightarrow 0}\Pr( \hat\tau_N - z_{\alpha}\hat\sigma_{\hat\tau_N}^2 > \tau_{H_0}| \tau=\tau_{H_0}) = \alpha, \label{eq:null_one_sided} \\
    &\lim_{\tau_{d} \rightarrow 0}\Pr( \hat\tau_N - z_{\alpha}\hat\sigma_{\hat\tau_N}^2 > \tau_{H_0}| \tau=\tau_{H_1}) = 1-\beta, \label{eq:alt_one_sided}
    \end{align}
    for $\tau_{H_1} > \tau_{H_0}$ and
    \begin{align}
    &\lim_{\tau_{d} \rightarrow 0}\Pr( \hat\tau_N + z_{\alpha}\hat\sigma_{\hat\tau_N}^2 < \tau_{H_0}| \tau=\tau_{H_0}) = \alpha, \\
    &\lim_{\tau_{d} \rightarrow 0}\Pr( \hat\tau_N + z_{\alpha}\hat\sigma_{\hat\tau_N}^2 < \tau_{H_0}| \tau=\tau_{H_1}) = 1-\beta,
    \end{align}
    for $\tau_{H_1} <\tau_{H_0}$.
\end{corollary}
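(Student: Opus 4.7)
The plan is to reduce the statement to Theorem \ref{thm:chow_robbins_style}. The stopping rule $g(\hat\sigma_{\hat\tau_n}^2, \tau_d^2/(z_\alpha+z_\beta)^2)=N$ has exactly the same functional form as the one in Theorem \ref{thm:chow_robbins_style}, with the constant $d^2/z_{\alpha/2}^2$ replaced by $\tau_d^2/(z_\alpha+z_\beta)^2$. Therefore every line of that proof carries over \emph{verbatim} upon letting $\tau_d\rightarrow 0$ in place of $d\rightarrow 0$. In particular, the analogue of equation \eqref{eq:part1} gives
\begin{equation*}
    \lim_{\tau_d\rightarrow 0}\frac{\tau_d^2 N}{(z_\alpha+z_\beta)^2\sigma^2_\mathcal{P}} p(1-p) = 1 \as,
\end{equation*}
and the Anscombe-style step yields $\sqrt{Np(1-p)}(\hat\tau_N-\tau)/\sigma_\mathcal{P}\xrightarrow{d}\mathcal{N}(0,1)$ as $\tau_d\to 0$, regardless of whether $\tau$ equals $\tau_{H_0}$ or $\tau_{H_1}$.

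Next I would establish the key ratio $\tau_d/\hat\sigma_{\hat\tau_N}\xrightarrow{p} z_\alpha+z_\beta$. The upper bound $\hat\sigma_{\hat\tau_N}^2 \le \tau_d^2/(z_\alpha+z_\beta)^2$ is immediate from the stopping rule, and the sandwich inequality analogous to \eqref{eq:ineq_dm}, combined with $N/(N-1)\to 1\as$ and $s_n\to 1\as$, supplies the matching lower bound. Applying Slutsky then gives $(\hat\tau_N-\tau)/\hat\sigma_{\hat\tau_N}\xrightarrow{d}\mathcal{N}(0,1)$ (interpreting the test statistic as using $\hat\sigma_{\hat\tau_N}$ rather than $\hat\sigma_{\hat\tau_N}^2$, which appears to be a typographic slip in the displayed events).

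With these two ingredients the four probability statements are routine algebraic rearrangements. For $\tau_{H_1}>\tau_{H_0}$, under $\tau=\tau_{H_0}$ the event $\hat\tau_N - z_\alpha\hat\sigma_{\hat\tau_N}>\tau_{H_0}$ rewrites as $(\hat\tau_N-\tau_{H_0})/\hat\sigma_{\hat\tau_N}>z_\alpha$, whose limiting probability is $1-\Phi(z_\alpha)=\alpha$. Under $\tau=\tau_{H_1}$ the same event rewrites as
\begin{equation*}
    \frac{\hat\tau_N-\tau_{H_1}}{\hat\sigma_{\hat\tau_N}} > z_\alpha - \frac{\tau_d}{\hat\sigma_{\hat\tau_N}},
\end{equation*}
and since the right-hand side converges in probability to $z_\alpha-(z_\alpha+z_\beta)=-z_\beta$ while the left-hand side is asymptotically standard normal, another application of Slutsky gives limit $1-\Phi(-z_\beta)=1-\beta$. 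The case $\tau_{H_1}<\tau_{H_0}$ is completely symmetric, replacing upper-tail with lower-tail events.

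The main obstacle is mostly bookkeeping: cleanly identifying the new threshold $\tau_d^2/(z_\alpha+z_\beta)^2$ with the role of $d^2/z_{\alpha/2}^2$ in Theorem \ref{thm:chow_robbins_style} so that the Chow--Robbins/Anscombe machinery transfers without re-derivation, and then carefully combining asymptotic normality of $(\hat\tau_N-\tau)/\hat\sigma_{\hat\tau_N}$ with the convergence $\tau_d/\hat\sigma_{\hat\tau_N}\xrightarrow{p} z_\alpha+z_\beta$ on the alternative to recover exactly power $1-\beta$.
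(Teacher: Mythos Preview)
Your proposal is correct and follows essentially the same route as the paper: transfer the Chow--Robbins/Anscombe conclusion of Theorem~\ref{thm:chow_robbins_style} to the new threshold $\tau_d^2/(z_\alpha+z_\beta)^2$, then combine asymptotic normality of the studentized statistic with the relation $\tau_d\approx (z_\alpha+z_\beta)\hat\sigma_{\hat\tau_N}$ to read off size and power. If anything, you are slightly more explicit than the paper: where the paper simply substitutes $\tau_{H_0}=\tau_{H_1}-(z_\alpha+z_\beta)\hat\sigma_{\hat\tau_N}$ as an identity, you justify this step rigorously via the sandwich argument and $\tau_d/\hat\sigma_{\hat\tau_N}\xrightarrow{p} z_\alpha+z_\beta$ before invoking Slutsky, and you also correctly flag the $\hat\sigma_{\hat\tau_N}^2$ versus $\hat\sigma_{\hat\tau_N}$ slip in the displayed events.
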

\begin{proof}
    We prove this corollary for $\tau_{H_1} > \tau_{H_0}$, but where the exact same steps can be taken to prove the result for $\tau_{H_1} < \tau_{H_0}$. We begin by noting that equation \eqref{eq:null_one_sided} follows directly from Theorem \ref{thm:chow_robbins_style}, so what remains to be proven is equation \eqref{eq:alt_one_sided}. Theorem \ref{thm:chow_robbins_style} implies that for a given half-width of a confidence interval, $d$, as $d \rightarrow 0$, it is the case that $\hat\tau \sim \mathcal{N}(\tau, d^2/z^2)$, for the stopping rule $g(\hat\sigma_{\hat\tau_n}^2,d^2/z^2)=N$ with $z>0$, which means that 
    \begin{equation}
        \lim_{d \rightarrow 0} \Pr( (\hat\tau_N - \tau_{H_1}) / \hat\sigma_{\hat\tau_N} < z  | \tau=\tau_{H_1}) = F(z).
    \end{equation}
    For finite $z$, together with the stopping rule which implies that $\tau_d=d$, we get
    \begin{equation}
        \lim_{\tau_d \rightarrow 0} \Pr( (\hat\tau_N - \tau_{H_1}) / \hat\sigma_{\hat\tau_N} < z  | \tau=\tau_{H_1}) = F(z).
    \end{equation}
    We can re-write this equation as
    \begin{equation}
        \lim_{\tau_d \rightarrow 0} \Pr( \hat\tau_N - z\hat\sigma_{\hat\tau_N}< \tau_{H_1}  | \tau=\tau_{H_1}) = F(z).
    \end{equation}
    Using the fact that $\tau_{H_0}=\tau_{H_1} - (z_{\alpha}+z_\beta)\hat\sigma_{\hat\tau_N}$, we get
    \begin{equation}
        \lim_{\tau_d \rightarrow 0} \Pr( \hat\tau_N - (z + z_{\alpha}+z_\beta)\hat\sigma_{\hat\tau_N}< \tau_{H_0}  | \tau=\tau_{H_1}) = F(z). \label{eq:Fz}
    \end{equation}
    Let $z=-z_\beta$ and switching the inequality sign, we get
    \begin{equation}
        \lim_{\tau_d \rightarrow 0} \Pr( \hat\tau - z_{\alpha}\hat\sigma_{\hat\tau_N}> \tau_{H_0}  | \tau=\tau_{H_1}) = F(z_\beta).
    \end{equation}
    By definition, $F(z_\beta)=1-\beta$, which proves the result.
\end{proof}
Corollary \ref{cor:equiv_d_power_stopping_rule} is a powerful result. It says that for a FPD to have size $\alpha$ and power $1-\beta$, it is sufficient to specify $\tau_{H_0}$, $\tau_{H_1}$ together with $\alpha$ and $\beta$. This is in contrast to traditional fixed-sample designs where it is also necessary to specify $\sigma_0^2$ and $\sigma_1^2$ (i.e., the variance of the outcome under control and treatment), something that is often the most difficult part of a power analysis. In some settings, estimates of $\sigma_0^2$ may be available with pre-experimental data. However, $\sigma_1^2$ may be more difficult to assess if no one historically has received the treatment, something that would be the case in many clinical trial settings, as well as in A/B testing in the tech industry. 

Still, the FPD is not a ``free lunch'' because it comes with one drawback compared to fixed-sample designs: the sample size, $N$, is stochastic. This fact could be a problem in settings where sampling is costly, and the experimenter does not want to run the risk of sampling for ``too long''. As noted in Remark \ref{rem:equiv_stopping_rules} for the FWCID, the stopping rule can be rewritten such that---at any point during the experiment---a forecast for how long the experiment will run for can be made as
\begin{equation}
    \hat{N}_n = \frac{\hat{\sigma}^2_{\mathcal{P}_n} (z_{\alpha} + z_\beta)^2}{(\tau_{H_1}-\tau_{H_0})^2\hat{p}_n(1-\hat{p}_n)},
\end{equation}
where $\hat{N}_n$ is interpreted as the estimated required sample size to achieve power of $1-\beta$ if $\tau=\tau_{H_1}$. Hence, if the forecast made suggest that the experiment would need to run for longer than what is defensible for ethical, monetary or other reason, the experiment can be aborted and the resulting fixed-sample confidence interval at that point can be used as confidence interval. Note, of course, that this only works if the decision to terminate is based on $\hat{N}_n$ (thereby implicitly on $\hat{\sigma}^2_{\mathcal{P}_n}$) and not based on $\hat\tau_n$.

Finally, we also note that we can get a double-sided test with correct size $\alpha$ and approximate power $1-\beta$ by replacing $z_\alpha$ with $z_{\alpha/2}$.\footnote{The power is only approximate because is excludes the very small probability of rejecting the null in the ``wrong direction'' (i.e., rejecting the null because $\hat\tau_N$ is sufficiently smaller than $\tau_{H_0}$ when $\tau_{H_1} > \tau_{H_0}$), something that is not relevant in practice. It is possible to get the correct power by noting that $F(z_\beta') + F(-(2z_{\alpha/2} + z_\beta')) = 1-\beta$ for some $z_\beta'$; a value that is straightforward to solve for numerically.}



\subsection{Asymptotically conservative FWCID \label{sec:consFWCID}}
Theorem \ref{thm:chow_robbins_style} and Corollary \ref{cor:equiv_d_power_stopping_rule} gives asymptotic guarantees that FWCID gives correct confidence interval coverage and that FPD gives correct size and power. For finite samples, the coverage of the confidence interval under FWCID will generally be too small since the stopping rule implies that the estimated variance of the treatment effect, $\hat\sigma_{\hat\tau_N}^2$ is (slightly) smaller, in expectation, than the true variance, $\sigma_{\hat\tau_N}^2$. This is intuitive, as we will stop too early only when we underestimate the variance, but never when we overestimate it. Another way of framing this is to say that since $\hat\sigma_{\hat\tau_N}^2 = \hat\sigma_{\mathcal{P}_N}^2 \kappa_N$, for $d>0$, it is generally the case that $\E(\hat\sigma_{\mathcal{P}_N}^2)<\sigma_{\mathcal{P}}^2$. 

To deal with this issue, we would like to find a conservative estimator for $\sigma_{\mathcal{P}}^2$. The way we propose to achieve this is to form a upper confidence sequence for $\sigma_{\mathcal{P}}^2$ that is ``always-valid'' (also called ``anytime valid''). This sequence, $U_n$, for a target parameter $\theta$ is an always-valid
$(1 - \alpha)$ upper confidence sequence if, given any
stopping time $n$, it holds that:
\begin{equation}
    \Pr(\theta \leq U_n) \geq 1 - \alpha, \forall n \in \mathbb{N}^+,
\end{equation}
see, e.g., \cite{howard2021}. Here we rely on the asymptotic version of such a confidence sequence as defined in \cite{waudby2021time}.
The upper bound of the always-valid confidence sequence forms a conservative sequence of estimates for $\sigma_{\mathcal{P}}^2$ which can be used in the stopping rules in Theorem \ref{thm:chow_robbins_style} and Corollary \ref{cor:equiv_d_power_stopping_rule}. Lemma \ref{lemma:ci_for_pooled_var} shows how such a sequence can be formed.

\begin{lemma}\label{lemma:ci_for_pooled_var}
    $\hat{\sigma}^2_{\mathcal{P}_n} + \sqrt{\var(Z)} \cdot \phi_n(\rho,\alpha_c)$,
    forms an asymptotic $1-\alpha_c$ always-valid upper confidence sequence for $\sigma^2_{\mathcal{P}}$, where
    \begin{equation}
        \phi_n(\rho,\alpha_c) := \sqrt{\frac{2(n\rho_c^2+1)}{n^2\rho^2}\log\left(1+\frac{\sqrt{n\rho^2+1}}{2\alpha_c}\right)},
    \end{equation}
    for the uncentered influence function $Z_i := \frac{1-p}{p}W_i (Y_i - \mu_{1})^2 + \frac{p}{1-p}(1 - W_i) (Y_i - \mu_{0})^2$ and any constant $\rho> 0$.
\end{lemma}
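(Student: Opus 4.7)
The proof plan is to recognize that $Z_i$ is an unbiased, i.i.d.\ estimator of $\sigma^2_{\mathcal{P}}$, and then invoke the asymptotic confidence sequence machinery of \cite{waudby2021time} applied to the sample mean $\bar Z_n = n^{-1}\sum_{i=1}^n Z_i$. To verify unbiasedness, I would compute $\E(Z_i)$ directly: since $W_i \indep Y_i(w)$, $\E(W_i(Y_i-\mu_1)^2) = p\sigma_1^2$ and $\E((1-W_i)(Y_i-\mu_0)^2) = (1-p)\sigma_0^2$, whence
\begin{equation}
    \E(Z_i) = \tfrac{1-p}{p}\cdot p\sigma_1^2 + \tfrac{p}{1-p}\cdot(1-p)\sigma_0^2 = (1-p)\sigma_1^2 + p\sigma_0^2 = \sigma^2_{\mathcal{P}}.
\end{equation}
Provided that $Y(0),Y(1)$ have finite fourth moments (an assumption I would state explicitly if not implicit in the setup), $\var(Z)$ is finite, so the i.i.d.\ sequence $\{Z_i\}$ satisfies the hypotheses of the asymptotic confidence sequence result in \cite{waudby2021time}. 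Applying the one-sided version of that result yields that $\bar Z_n + \sqrt{\var(Z)}\,\phi_n(\rho,\alpha_c)$ is an asymptotic $(1-\alpha_c)$ upper confidence sequence for $\sigma^2_{\mathcal{P}}$.

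The second, and main, step is to bridge from $\bar Z_n$ (which uses the unknown parameters $p,\mu_0,\mu_1$) to the fully empirical quantity $\hat\sigma^2_{\mathcal{P}_n}$. A short algebraic computation shows that $\hat\sigma^2_{\mathcal{P}_n}$ is exactly the plug-in analog of $\bar Z_n$ with $p\mapsto \hat p_n$ and $\mu_w\mapsto \hat\mu_{wn}$; concretely, $\hat\sigma^2_{\mathcal{P}_n} = (1-\hat p_n)\hat\sigma^2_{1n} + \hat p_n \hat\sigma^2_{0n}$ while $\bar Z_n = \tfrac{1-p}{p}\hat p_n \tilde\sigma^2_{1n} + \tfrac{p}{1-p}(1-\hat p_n)\tilde\sigma^2_{0n}$, with $\tilde\sigma^2_{wn}$ the centered-at-$\mu_w$ analog of $\hat\sigma^2_{wn}$. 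The plan is to decompose the difference $\hat\sigma^2_{\mathcal{P}_n} - \bar Z_n$ into (i) a mean-shift term arising from $\hat\mu_{wn}-\mu_w$, which contributes $O_p(1/n)$ via the identity $\hat\sigma^2_{wn} = \tilde\sigma^2_{wn} - (\hat\mu_{wn}-\mu_w)^2$, and (ii) a proportion-shift term arising from $\hat p_n - p = O_p(n^{-1/2})$ multiplied by sample variances that converge a.s.\ to $\sigma_w^2$. Both terms are $o(\phi_n(\rho,\alpha_c))$ almost surely because $\phi_n$ decays only at rate $\sqrt{\log n / n}$ up to constants.

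The main obstacle is that the argument must be uniform in $n$, not pointwise, because the definition of an always-valid confidence sequence requires the coverage guarantee to hold simultaneously across all (possibly data-dependent) stopping times. I would handle this by invoking the notion of asymptotic confidence sequence from \cite{waudby2021time}, which explicitly allows an additive slack that is $o(\phi_n)$ almost surely, and then showing that my decomposition of $\hat\sigma^2_{\mathcal{P}_n}-\bar Z_n$ fits into that slack. For the proportion-shift component, the law of the iterated logarithm on $\hat p_n - p$ gives $|\hat p_n - p| = O(\sqrt{\log\log n/n})$ a.s., which, multiplied by the a.s.\ bounded $\tilde\sigma^2_{wn}$, is still $o(\phi_n)$; an analogous argument handles the mean-shift term.

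Combining the two pieces, with probability at least $1-\alpha_c$ asymptotically and uniformly over stopping times,
\begin{equation}
    \sigma^2_{\mathcal{P}} \leq \bar Z_n + \sqrt{\var(Z)}\,\phi_n(\rho,\alpha_c) = \hat\sigma^2_{\mathcal{P}_n} + \sqrt{\var(Z)}\,\phi_n(\rho,\alpha_c) + o(\phi_n(\rho,\alpha_c)),
\end{equation}
which establishes the stated asymptotic upper confidence sequence.
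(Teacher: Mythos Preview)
Your proposal is correct and follows essentially the same approach as the paper: both invoke the asymptotic confidence sequence framework of \cite{waudby2021time} (specifically their Corollary~3.4 and Proposition~B.1) for the i.i.d.\ sequence $Z_i$ with mean $\sigma^2_{\mathcal P}$, and then show that $\hat\sigma^2_{\mathcal P_n}-\bar Z_n = o(\sqrt{\log n/n})$ a.s.\ by decomposing into a mean-shift piece $(\hat\mu_{wn}-\mu_w)^2$ and a proportion-shift piece driven by $\hat p_n-p$, each controlled via the LIL. The paper's proof makes the triangle-inequality reduction $\hat p_n|\hat\mu_{1n}-\mu_1|\le |\hat p_n\hat\mu_{1n}-p\mu_1|+|\mu_1|\,|\hat p_n-p|$ explicit so that LIL applies to genuine i.i.d.\ averages over all $n$ observations; you should spell this out where you write ``an analogous argument handles the mean-shift term,'' since $\hat\mu_{wn}$ itself is an average over the random subset $\{i:W_i=w\}$ rather than over $n$ i.i.d.\ terms.
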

\begin{proof}
By Corollary 3.4 and Proposition B.1 in \cite{waudby2021time}, using the fact that $\var(Z - \sigma^2_\mathcal{P}) = \var(Z)$, Lemma \eqref{lemma:ci_for_pooled_var} holds if $\var(Z)$ is finite and
\begin{align}
    \hat{\sigma}^2_{\mathcal{P}_n} - \sigma^2_{\mathcal{P}} = \frac{1}{n}\sum_{i=1}^n (Z_i-\sigma^2_{\mathcal{P}}) + o\left(\sqrt{\frac{\log(n)}{n}} \right), \as \label{eq:eq48waudby}
\end{align}
Define $\tilde{\sigma}_{wn}^2 := \frac{1}{n_w}\sum_{i:W_i=w}^{n}(Y_i(w)-\mu_w)^2$, for $w=0,1$  and $\tilde{\sigma}^2_{\mathcal{P}_n}:=\frac{1}{n} \sum_{i=1}^n Z_i$ as natural variance estimators for $\sigma_w^2$ and $\sigma^2_\mathcal{P}$ if $\mu_w$ had been known. Equation \eqref{eq:eq48waudby} simplifies to
\begin{align}
    \hat{\sigma}^2_{\mathcal{P}_n} -  \tilde{\sigma}^2_{\mathcal{P}_n} &= 
    \frac{n_0}{n}\hat\sigma^2_{1n} + \frac{n_1}{n}\hat\sigma^2_{0n} -\frac{1}{n} \sum_{i=1}^n Z_i +o\left(\sqrt{\frac{\log(n)}{n}} \right)\nonumber \\ 
    &= 
    (1-\hat{p}_n)\hat\sigma^2_{1n} + \hat{p}_n\hat\sigma^2_{0n} - (1-p)\frac{\hat{p}_n}{p} \tilde{\sigma}^2_{1n} - p\frac{1-\hat{p}_n}{1-p} \tilde{\sigma}^2_{0n}+o\left(\sqrt{\frac{\log(n)}{n}} \right)
    \nonumber \\ 
    &=(1-\hat{p}_n) \left(\hat\sigma^2_{1n} -  \frac{1-p}{1-\hat{p}_n}\frac{\hat{p}_n}{p} \tilde{\sigma}^2_{1n}\right) + 
    \hat{p}_n \left(\hat\sigma^2_{0n} -  \frac{p}{\hat{p}_n}\frac{1-\hat{p}_n}{1-p} \tilde{\sigma}^2_{0n}\right) + o\left(\sqrt{\frac{\log(n)}{n}} \right).
\end{align}

Because of symmetry of $ \hat{p}_n\left(\hat{\sigma}^2_{1n}-\tilde{\sigma}^2_{1n}\right)$ and $(1-\hat{p}_n)\left(\hat{\sigma}^2_{0n}-\tilde{\sigma}^2_{0n}\right)$, it is sufficient to show that one of the first two terms is $o(\sqrt{\log(n)/n}) \as$ for equation \eqref{eq:eq48waudby} to hold. Define $v_n:=(1-p)\hat{p}_n/((1-\hat{p}_n)p)$, we have
\begin{align}
   \hat{p}_n(\hat{\sigma}^2_{1n}-v_n\tilde{\sigma}_{1n}^2) &= \hat{p}_n\frac{1}{n_1}\sum_{i:W_i=1} \left(Y_i(1)-\hat\mu_{1n}\right)^2-\hat{p}_nv_n\frac{1}{n_1}\sum_{i:W_i=1} \left(Y_i(1)-\mu_{1} \right)^2 \nonumber \\
   &= \hat{p}_n\left(\frac{1}{n_1} \sum_{i:W_i=1}Y_i(1)^2 - \hat{\mu}_{1n}^2 \right) - \hat{p}_nv_n\left(\frac{1}{n_1} \sum_{i:W_i=1}Y_i(1)^2 + \mu_1^2 - 2 \hat\mu_{1n} \mu_1\right)\nonumber \\
   &=-\hat{p}_n (\hat\mu_{1n}^2 + v_n\mu_1^2 - v_n2 \hat\mu_{1n}\mu_1) +(1-v_n) \hat{p}_n\frac{1}{n_1} \sum_{i:W_i=1}Y_i(1)^2\nonumber\\
   &=-\hat{p}_n (\hat\mu_{1n}^2 + \mu_1^2 - 2 \hat\mu_{1n}\mu_1) + \hat{p}_n(1-v_n)\left(\mu_1^2 - 2 \hat\mu_{1n}\mu_1+\frac{1}{n_1} \sum_{i:W_i=1}Y_i(1)^2 )\right)\nonumber\\
   &=-\hat{p}_n (\hat{\mu}_{1n}-\mu_{1})^2 + \hat{p}_n(1-v_n)\left(\mu_1^2 - 2 \hat\mu_{1n}\mu_1+\frac{1}{n_1} \sum_{i:W_i=1}Y_i(1)^2 )\right).
\end{align}
The second term is $O(1/\sqrt{n}) \cdot o(1) \as$, and so we only need to verify that the first term is $o(\sqrt{\log(n)/n})$.
Studying the absolute difference, $\hat{p}_n |(\hat{\mu}_{1n}-\mu_{1})|$ and using the  triangle inequality, we have
\begin{align*}
    \hat{p}_n|\hat{\mu}_{1n}-\mu_1| &= \left|\hat{p}_n\hat{\mu}_{1n} - \hat{p}_n\mu_1\right|\\
     &= \left|\hat{p}_n\hat{\mu}_{1n} - p\mu_1 - (\hat{p}_n\mu_1 - p\mu_1)\right|\\
    &\leq \left|\hat{p}_n\hat{\mu}_{1n} - p\mu_1 \right| - \mu_1\left|\hat{p}_n-p\right|.
\end{align*}
Each of these two terms are mean-centered i.i.d random variables which, by LIL (law of the iterated logarithm), means that
\begin{equation}
    \hat{p}_n|\hat{\mu}_{1n}-\mu_1| = O\left(\sqrt{\frac{\log(\log(n))}{n}}\right), \as \Longleftrightarrow \hat{p}_n(\hat{\mu}_{1n}-\mu_1)^2=O\left(\frac{\log(\log(n))}{n}\right), \as
\end{equation}
Hence,
\begin{equation}
    |\hat{\sigma}^2_{\mathcal{P}_n}-\tilde{\sigma}^2_{\mathcal{P}_n} | = O\left(\frac{\log(\log(n))}{n}\right) = o\left(\sqrt{\frac{\log(n)}{n}}\right), \as,
\end{equation}
which concludes the proof.
\end{proof}
Lemma \ref{lemma:ci_for_pooled_var} includes the unknown variance of $Z$. By replacing $\mu_w$ with $\hat\mu_w$ we can define
\begin{equation}
    \hat{Z}_i := W_i(Y_i(1)-\hat\mu_{1})^2 + (1-W_i)(Y_i(0)-\hat\mu_{0})^2,
\end{equation}
and estimate the variance as
\begin{equation}
    \widehat{\var(Z)} = \frac{1}{n}\sum_{i=1}^n\left(\hat{Z}_i - \frac{1}{n} \sum_{j=1}^n \hat{Z}_j\right)^2 =
    \frac{1}{n}\sum_{i=1}^n\left(\hat{Z}_i - \hat{\sigma}^2_{\mathcal{P}_n}\right)^2.
\end{equation}

Using Lemma \ref{lemma:ci_for_pooled_var} we can construct an asymptotic always-valid upper confidence sequence for $\sigma^2_{\mathcal{P}}$ for a given confidence level $\alpha_c$. The upper bound of this confidence sequence is
\begin{equation}
    \hat\sigma^2_{\mathcal{P}_n^{ub}} := \hat{\sigma}^2_{\mathcal{P}_n} +  \sqrt{\widehat{\var(Z)}} \cdot \phi_n(\rho,\alpha_c).\label{eq:def_sigma2_p_ub}
\end{equation}
Using this result means that we can now construct a conservative stopping rule as
\begin{equation}
    g( \hat\sigma^2_{\mathcal{P}_n^{ub}}\kappa_n, d^2/z_{\alpha/2}^2) = N^c. \label{eq:cons_stopping_rule}
\end{equation}
I.e., $N^c$ is the sample size at which sampling is stopped. We can now state the analogous result to Theorem \ref{thm:chow_robbins_style} for the stopping rule in equation \eqref{eq:cons_stopping_rule}:
\begin{theorem}
    Assume that $\E(Y(1)^4)<\infty$ and $\E(Y(0)^4)<\infty$. For the target parameter $\tau$ and the stopping rule $g( \hat\sigma^2_{\mathcal{P}_n^{ub}}\kappa_n, d^2/z_{\alpha/2}^2) = N^c$, with $0<\alpha<1,0<\alpha_c<1$, and the estimator of the treatment effect $\hat\tau_{N^c}$, $I_{N^c}$ is a fixed-width confidence interval. I.e., it is the case that 
    \begin{equation}
        \lim_{d\rightarrow 0} \Pr(\tau \in I_{N^c}) = 1-\alpha
    \end{equation}
    \label{thm:chow_robbins_style_cons}
\end{theorem}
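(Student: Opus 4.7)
The plan is to follow the Chow-Robbins-style template used in the proof of Theorem \ref{thm:chow_robbins_style}, replacing $\hat{\sigma}^2_{\mathcal{P}_n}$ by its conservative upper bound $\hat\sigma^2_{\mathcal{P}_n^{ub}}$ throughout. Concretely, I would define the analog of equation \eqref{eq:def_sn} as
\begin{equation*}
s_n^c := \frac{\hat\sigma^2_{\mathcal{P}_n^{ub}}}{\sigma^2_\mathcal{P}}\frac{p}{\hat p_n}\frac{1-p}{1-\hat p_n},
\end{equation*}
keep $t$ as in equation \eqref{eq:def_t}, and then observe that the conservative stopping rule in equation \eqref{eq:cons_stopping_rule} is equivalent to $N^c = \inf\{n \in \mathbb{N}^+ : s_n^c \leq n/t\}$. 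Once we establish that $s_n^c > 0 \as$ and $\lim_{n\to\infty} s_n^c = 1 \as$, everything from equation \eqref{eq:def_N_v2} onward in the proof of Theorem \ref{thm:chow_robbins_style} can be reused verbatim: $N^c$ is non-decreasing in $t$ with $N^c \to \infty \as$, the sandwich inequality $s_{N^c}^c \leq N^c/t < (N^c/(N^c-1)) s_{N^c - 1}^c$ pins down $\lim_{d\to 0} d^2 N^c p(1-p)/(z_{\alpha/2}^2 \sigma^2_\mathcal{P}) = 1 \as$, and Anscombe's theorem then delivers the desired coverage statement.

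The whole argument therefore reduces to proving the convergence $s_n^c \to 1 \as$, and this is where the fourth-moment assumption enters. Writing
\begin{equation*}
\frac{\hat\sigma^2_{\mathcal{P}_n^{ub}}}{\sigma^2_\mathcal{P}} = \frac{\hat{\sigma}^2_{\mathcal{P}_n}}{\sigma^2_\mathcal{P}} + \frac{\sqrt{\widehat{\var(Z)}}}{\sigma^2_\mathcal{P}}\,\phi_n(\rho,\alpha_c),
\end{equation*}
the first term converges to $1 \as$ by the SLLN applied to the within-group sample variances (together with $\hat p_n \to p \as$). For the second term, a direct inspection of the closed form of $\phi_n(\rho,\alpha_c)$ gives $\phi_n(\rho,\alpha_c) = O(\sqrt{\log(n)/n}) \to 0$. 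The assumptions $\E(Y(1)^4) < \infty$ and $\E(Y(0)^4) < \infty$ imply $\var(Z) < \infty$ and, by an SLLN argument analogous to the one used in the proof of Lemma \ref{lemma:ci_for_pooled_var} (to justify replacing $\mu_w$ with $\hat\mu_{wn}$ inside $Z_i$), yield $\widehat{\var(Z)} \to \var(Z) \as$ Combining these two gives $\sqrt{\widehat{\var(Z)}}\cdot \phi_n(\rho,\alpha_c) \to 0 \as$, and since the ratio $p(1-p)/(\hat p_n(1-\hat p_n)) \to 1 \as$, we conclude $s_n^c \to 1 \as$

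I expect the main obstacle to be the verification that $\widehat{\var(Z)} \to \var(Z) \as$, because $\widehat{\var(Z)}$ is built from $\hat Z_i$ (using $\hat\mu_{wn}$) rather than the infeasible $Z_i$ (using $\mu_w$), and it is the $\E(Y(w)^4) < \infty$ assumption that guarantees $\hat\mu_{wn}$ can be substituted without spoiling the almost-sure limit. Once this piece is in hand, the rest of the proof is a cosmetic rewriting of the steps already executed in Theorem \ref{thm:chow_robbins_style}, since the conservative upper bound only inflates $N^c$ by a term that vanishes almost surely relative to $t = z_{\alpha/2}^2 \sigma^2_\mathcal{P}/(d^2 p(1-p))$ as $d \to 0$, so asymptotic coverage is unaffected.
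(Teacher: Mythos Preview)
Your proposal is correct and mirrors the paper's own proof essentially step for step: define $s_n^c$ with $\hat\sigma^2_{\mathcal{P}_n^{ub}}$ in place of $\hat{\sigma}^2_{\mathcal{P}_n}$, reduce everything to showing $s_n^c \to 1$ a.s., and obtain that convergence by combining $\hat{\sigma}^2_{\mathcal{P}_n}/\sigma^2_\mathcal{P} \to 1$ a.s.\ with $\phi_n(\rho,\alpha_c)\to 0$ and the a.s.\ boundedness of $\widehat{\var(Z)}$ guaranteed by the fourth-moment assumption. The only cosmetic difference is that the paper states the variance-term limit as $O(1)$ a.s.\ rather than convergence to $\var(Z)$, which is all that is actually needed.
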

\begin{proof}
    The proof is analogous to the proof of Theorem \ref{thm:chow_robbins_style}, but where $s_n$ in equation \eqref{eq:def_sn} is replaced with:
    \begin{equation}
        s_n^c := \frac{\hat\sigma^2_{\mathcal{P}_n^{ub}}}{\sigma^2_{\mathcal{P}}}\frac{p}{\hat{p}_n}\frac{1-p}{1-\hat{p}_n}.
    \end{equation}
    Once again, $s_n^c>0 \as$, and so, if we can show that
    \begin{equation}
        \lim_{n\rightarrow\infty}  s_n^c = 1, \as, \label{eq:cond_thm2}
    \end{equation}
    then the theorem is proven by following the exact same steps as in the proof for Theorem \ref{thm:chow_robbins_style}. Furthermore, using Lemma \ref{lemma:ci_for_pooled_var} (which holds because of finite fourth moments), equation \eqref{eq:cond_thm2} is true if
    \begin{equation}
        \lim_{n\rightarrow\infty}
        \frac{\hat\sigma^2_{\mathcal{P}_n^{ub}}}{\sigma^2_{\mathcal{P}}} =
        1, \as
    \end{equation}
    By the definition of $\hat\sigma^2_{\mathcal{P}_n^{ub}}$ in equation \eqref{eq:def_sigma2_p_ub}, together with the fact that $\lim_{n\rightarrow\infty}\hat\sigma^2_{\mathcal{P}_n}/\sigma^2_{\mathcal{P}} = 1 \as$, we have
    \begin{equation}
        \lim_{n\rightarrow\infty}
        \frac{\hat\sigma^2_{\mathcal{P}_n^{ub}}}{\sigma^2_{\mathcal{P}}} =
        1 + \frac{\sqrt{\frac{1}{n}\sum_{i=1}^n\left(\hat{Z}_i - \hat{\sigma}^2_{\mathcal{P}_n}\right)^2} \cdot \phi_n(\rho,\alpha_c)}{\sigma^2_{\mathcal{P}}}.
    \end{equation}
    Hence, we need to show that the second term tends to zero almost surely. It is the case that $\phi_n(\rho,\alpha_c)=o(1)$ for $\alpha_c>0$. We also have
    \begin{multline}
        \frac{1}{n}\sum_{i=1}^n\left(\hat{Z}_i - \hat{\sigma}^2_{\mathcal{P}_n}\right)^2 = \frac{1}{n}\sum_{i=1}^n \hat{Z}_i^2 -  \hat{\sigma}^4_{\mathcal{P}_n} = \\
          \frac{1}{n}\left(\sum_{i=1}^n W_i(Y_i(1)-\hat\mu_1)^4 + \sum_{i=1}^n (1-W_i)(Y_i(0)-\hat\mu_0)^4 \right)- \hat{\sigma}^4_{\mathcal{P}_n}.
    \end{multline}
    Let $\mu_{4w}$ be the fourth central moment for $Y(w)$, it is the case that
    \begin{equation}
         \lim_{n\rightarrow\infty}
         \frac{1}{n}\sum_{i=1}^n\left(\hat{Z}_i - \frac{1}{n} \sum_{j=1}^n \hat{Z}_j\right)^2 =p\mu_{41} + (1-p)\mu_{40} - \sigma^4_{\mathcal{P}} \as 
    \end{equation}
    Because we assumed finite fourth moments of the potential outcomes, this term is $O(1) \as$ Hence,
    \begin{equation}
        \lim_{n\rightarrow\infty}
        \frac{\hat\sigma^2_{\mathcal{P}_n^{ub}}}{\sigma^2_{\mathcal{P}}} = 1 + O(1) \cdot o(1) \as = 1 \as,
    \end{equation}
    which proves the theorem.
\end{proof}
Just as the naive FWCID design in Theorem \ref{thm:chow_robbins_style} gives the FPD design in Corollary \ref{cor:equiv_d_power_stopping_rule}, we can use the conservative FWCID design in Theorem \ref{thm:chow_robbins_style_cons} to get a conservative version of the FPD design \ref{cor:equiv_d_power_stopping_rule} by replacing $\hat\sigma_{\hat\tau_n}^2$ with $\hat\sigma^2_{\mathcal{P}_n^{ub}} \kappa_n$ in Corollary \ref{cor:equiv_d_power_stopping_rule}.

It is important to distinguish between $\alpha$ and $\alpha_c$. The former gives the confidence level of the targeted confidence interval for the treatment effect estimator. The latter is the confidence level of the confidence sequence of $\sigma^2_{\mathcal{P}}$. These two values need not be the same and $\alpha_c$ should be chosen based on just how conservative the experimenter wants to be (or, phrased differently, how much to insure against stopping too early), where a smaller $\alpha_c$ means being more conservative (stopping later).

However, regardless of how small a value of $\alpha_c$ is chosen, asymptotically, any $\alpha_c>0$ leads to asymptotic efficiency in the sense discussed in Remark \ref{rem:as_eff}. I.e., the ratio of $N$ and $N^c$ converges to 1 as $d\rightarrow 0$. Of course, for finite-sample performance, the choice of $\alpha_c$ matters.
One may question why we are not simply using the always-valid sequential confidence sequence directly for the treatment effect estimator, using the results in, e.g., \cite{howard2021}. Doing so would clearly by definition make the CI coverage bounded by $1-\alpha$ at any point, including the stopping point given by a CI-width based rule. The reason is that these always-valid confidence sequences are less efficient. Instead of converging at the rate $O(1/\sqrt{n})$, these sequences instead converge at the rate implied by LIL, i.e.,  $O(\log (\log( n)) / \sqrt{n})$, which means that the ratio of the sample size implied by such a design to $N$ or $N^c$ will tend to infinity asymptotically. We show the poor finite-sample performance (in terms of efficiency) of this design in the simulations below.

\section{Simulation Study}
To study finite sample behavior of the FWCID, we perform Monte Carlo studies. In addition to naive (Theorem \ref{thm:chow_robbins_style}) and conservative (Theorem \ref{thm:chow_robbins_style_cons}) versions of the FWCID, we also include an always-valid version using the interval in \cite{waudby2021time}. That is, sampling is stopped once the always-valid half-width is smaller than $d$. The half-width is equal to $v_n \cdot \psi_n(\rho,\alpha)$, where these values are given by (see Appendix A.1 in \citealt{maharaj2023anytime}):
\begin{align}
    v_n &= \sqrt{\frac{1}{\hat{p}_n}(\hat\sigma^2_{1n} + \hat\mu_{1n}^2) + \frac{1}{1-\hat{p}_n}(\hat\sigma^2_{0n} + \hat\mu_{0n}^2)- (\hat\mu_{1n}-\hat\mu_{0n})^2}, \\
    \psi_n(\rho,\alpha) &= \sqrt{\frac{2(n\rho^2+1)}{n^2\rho^2} \log\left(\frac{\sqrt{n\rho^2+1}}{\alpha}\right)}.
\end{align}
Note that $\psi_n(\rho,\alpha)$ differs from $\phi_n(\rho,\alpha)$ because the former is for a symmetric confidence interval, whereas the latter is for a upper confidence interval (i.e., the lower bound is set to $-\infty$). To summarize, the stopping rules for the three designs we consider are:
\begin{align*}
    \text{FWCID, naive}: & \quad g(\hat\sigma^2_{\hat\tau_n},d^2/z_\alpha^2),\\
    \text{FWCID, conservative}: &\quad g(\hat\sigma^2_{\mathcal{P}_n^{ub}}\kappa_n,d^2/z_\alpha^2),\\
    \text{FWCID, always-valid}: &\quad g(v_n^2, d^2/\psi_n^2(\rho,\alpha)).
\end{align*}

We study eight different data-generating processes (DGPs), where treatment is decided by a standard Bernoulli trial with probability of treatment equaling $p=.5$. We have the following eight DGPs:
\begin{align*}
    DGP 1:&\quad  a_1Y(0)\sim Normal(0,1), \quad &&a_1Y(1)\sim Normal(0,1) \\
    DGP 2:&\quad  a_2Y(0)\sim Lognormal(0,1), \quad &&a_2Y(1)\sim Lognormal(0,1) \\
    DGP 3:&\quad  a_3Y(0)\sim Normal(0,1), \quad &&a_3Y(1)\sim Normal(0.2,1) \\
    DGP 4:&\quad  a_4Y(0)\sim Lognormal(0,1), \quad &&a_4Y(1)\sim Lognormal(0,1) + .2 \\
    DGP 5:&\quad  a_5Y(0)\sim Lognormal(0,1), \quad &&a_5Y(1)\sim Lognormal(c_1,1) \\
    DGP 6:&\quad  a_6Y(0)\sim Gamma(1,1),\quad  &&a_6Y(1)\sim Lognormal(c_2,9/16)\\
    DGP 7:&\quad  a_7Y(0)\sim Gamma(1,1), \quad &&a_7Y(1)\sim Gamma(1,c_3) \\
    DGP 8:&\quad  a_8Y(0)\sim Gamma(1,1),\quad  &&a_8Y(1)\sim Gamma(c_4,1).
\end{align*}
The constants $a_j$ are set such that 
\begin{equation}
    \var(Y(0))/(1-p) + \var(Y(1))/p=1, \label{eq:norm_var}
\end{equation}
to facilitate easier comparison of results between simulations of different DGPs. The first two DGPs have no treatment effects, whereas the next two have a homogeneous effect of $\tau=.2$. For the last four DGPs, the treatment effect is heterogeneous where $c_1=1.266\ldots,c_2=-.0949\ldots,c_3=1.223\ldots,c_4=1.210\ldots$ are set such that $\tau=.2$.

We perform simulations for different values of $d$, where $d$ range from $.01$ to $.5$ in increments of $.01$. For each value of $d$ and DGP, we perform 100,000 replications. Let $N^*$ be the sample size for a fixed-sample confidence interval. The asymptotic value of $d$ for a fixed-sample confidence interval is
\begin{equation}
    d^* = \sqrt{\var(Y(0)/((1-p)N^*) + \var(Y(1)/(pN^*)} \cdot z_{\alpha/2}.
\end{equation}
Using the fact that $p=.5$ together with equation \eqref{eq:norm_var}, we can solve for $N^*$ as
\begin{equation}
    N^* = \frac{4 z_{\alpha/2}^2}{{d^*}^2}.
\end{equation}
Let $\overline{N}$ be the value of $N^*$ we get if we replace $d^*$ with $d$. We set $\alpha=.1$ and so for $d=.01,.02,\ldots,.49,.50$, we get $\overline{N}=\num{108222}; \num{27055};\ldots;45;43$, which gives a rough estimate of how big the sample sizes should be for the different simulations. We let $\alpha_c=.1$, whereas $\rho$ is set such that $\phi_n$ and $\psi_n$ are minimized for $n=\overline{N}$, respectively (in line with the suggestion of \citealt{waudby2021time}).

\subsection{Simulation results for FWCID}

We show three sets of results: one for bias (i.e., average treatment effect estimate minus population average treatment effect; Figure \ref{fig:bias}), one for coverage of a 90\% confidence interval (Figure \ref{fig:coverage}) and one for relative sample size, $N/\overline{N}$ (Figure \ref{fig:nratio}). The asymptotic results are for when $d\rightarrow 0$: hence the convergence goes from right to left in each graph. In general, we have enough replications to make sampling variation negligible; hence, no standard errors or intervals are included in the graphs.

\begin{figure}
    \centering
    \includegraphics{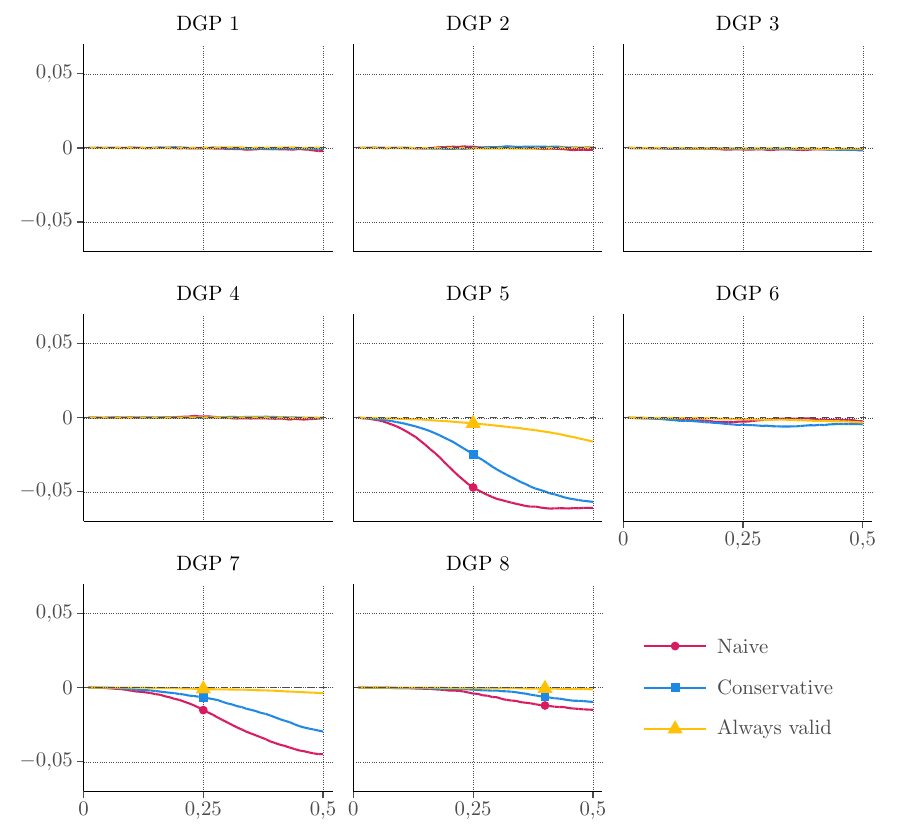}
    \caption{Bias ($y$-axis) of the difference-in-means estimator as a function of $d$ ($x$-axis) for the FWCID}
    \label{fig:bias} 
\end{figure}

Figure \ref{fig:bias} displays the bias of the difference-in-means estimator. In line with Remark \ref{rem:unbiasedness}, we see that when treatment effects are homogeneous, there is no bias at all. With heterogeneous treatment effects, there is bias that depend on the DGP, but, as expected, this bias disappears as $d \rightarrow 0$. Generally, the naive design has the largest bias followed by the conservative design with the always-valid having the smallest bias. This is likely due to the fact that sample sizes for a given $d$ is increasing in that order.

\begin{figure}
    \centering
    \includegraphics{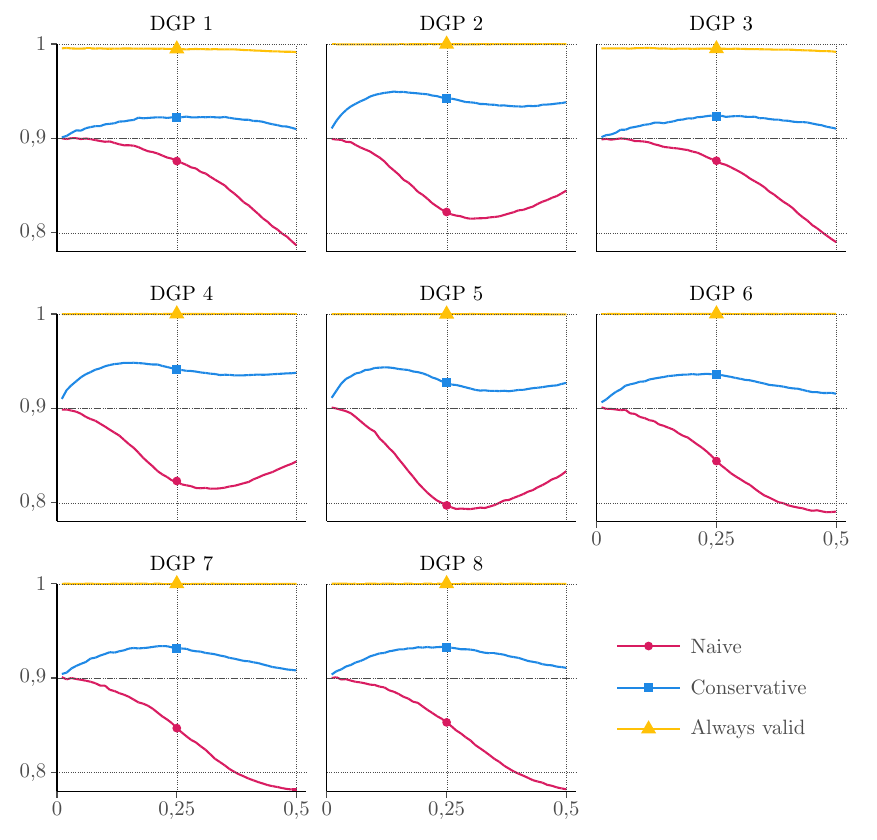}
    \caption{Coverage of a 90\% confidence interval ($y$-axis) of the treatment effect at the stopping point as a function of $d$ ($x$-axis) for the FWCID}
    \label{fig:coverage}
\end{figure}

Turning to coverage of the 90\% confidence interval, we see that the naive interval generally undershoots but converges to 90\%, where the opposite is generally the case for the conservative confidence interval. As expected, the always-valid interval has too wide coverage in all settings with the probability that the interval covers the true parameter being almost one for all DGPs.

\begin{figure}
    \centering
    \includegraphics{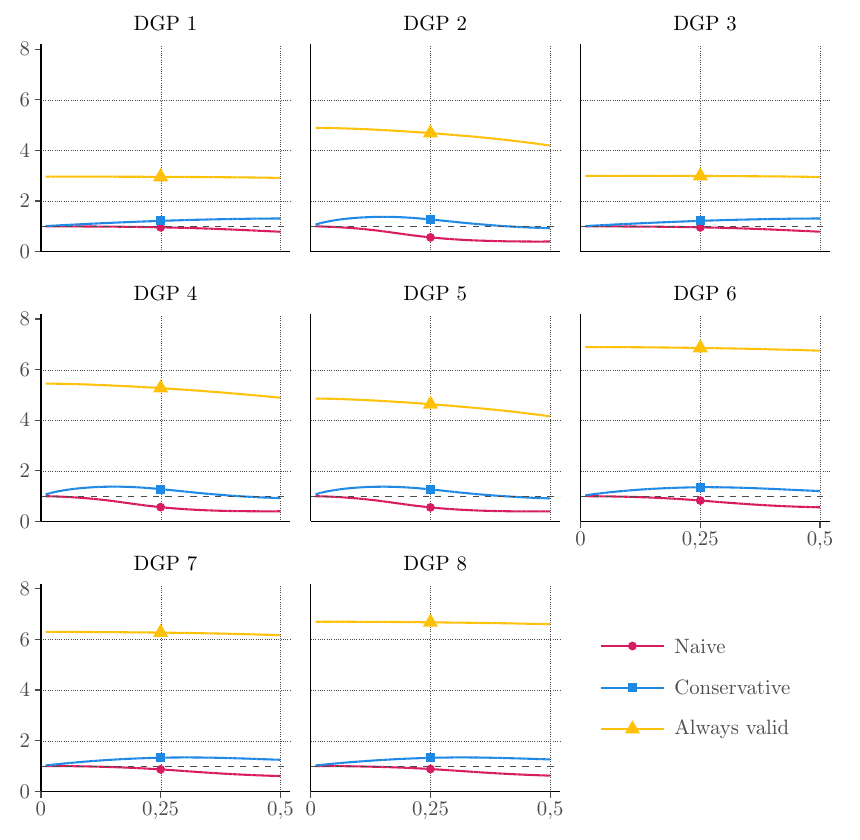}
    \caption{Relative sample size, $E(N)/\overline{N}$ ($y$-axis) as a function of $d$ ($x$-axis) for the FWCID}
    \label{fig:nratio}
\end{figure}

Finally, the explanation for the pattern in Figure \ref{fig:coverage} is given in Figure \ref{fig:nratio}. Here the ratio of the average sample size to the sample size of a fixed-sample design is plotted against $d$. We see that the sample size of the naive design is generally smaller, as expected, but the ratio converges to one as $d \rightarrow 0$ (see Remark \ref{rem:as_eff}), with the fastest convergence happening when data are normal. For the conservative design, convergence generally comes from above as desired and, again, convergence is fastest for normal data. For the always-valid design, the relative sample size is much larger, especially when data follow a skewed distribution. This conservative behavior is a known property of the always-valid CI sequences due to the LIL.


\subsection{Simulation results for hypothesis testing}
To evaluate the FPD, we perform simulations for the six DGPs which have an average treatment effect of .2. We set $\alpha=.05$, $\alpha_c=.1$ and let $\tau_{H_0}=0$, while $\tau_{H_1}$ (and, hence, $\tau_d$) is varied from .1 to .4 in increments of .01. We also need to set a maximum sample size, $N^{max}$, which we set to either $\overline{N}$, $1.5\overline{N}$ or $2\overline{N}$, with $\overline{N}$ being defined as
\begin{equation}
    \overline{N} := \frac{(z_\alpha + z_\beta)^2}{\tau_d^2 p (1-p)}.
\end{equation}
We let the null be $\tau \leq t_{H0}$ and we have $p=.5$. Aside from the two FPD designs (naive and conservative) we include two other designs. The first is the always-valid approach, where a sequence of lower confidence intervals are formed (i.e., the upper bound of the interval is $+\infty$) and the experiment is stopped and the null is rejected as soon as the interval does not cover $\tau_{H_0}$. The second is the commonly used group sequential test (GST, \citealt{lan_discrete_1983}) with alpha spending function $\alpha_n = \alpha \cdot (n/N^{max})^2$, which gives a sequence of critical $z$-values $z_n$ where the experiment is stopped and the null is rejected as soon as the standard $z$-statistic exceeds $z_n$. In summary, the four stopping rules in the simulations are:
\begin{align*}
    \text{FPD, naive}: & \quad g(\hat\sigma^2_{\hat\tau_n},\tau_d^2/(z_\alpha+z_\beta)^2),\\
    \text{FPD, conservative}: &\quad g(\hat\sigma^2_{\mathcal{P}_n^{ub}}\kappa_n,\tau_d^2/(z_\alpha+z_\beta)^2),\\
    \text{Always-valid test}: &\quad g(-(\hat\tau_n - \tau_{H_0})/v_n,-\phi_n(\rho,\alpha)),\\
    \text{GST}: &\quad g(-(\hat\tau_n-\tau_{H_0})/\sigma_{\hat\tau_n}, -z_n).
\end{align*}
The minus signs for the last two approaches come from the fact that the stopping rule as defined in Definition \ref{def:stopping rule} is to stop as soon as the function of data is \emph{smaller} than the sequence of constants. Once again, $\rho$ for the conservative FPD and the always-valid test are set to minimize $\phi_n$, for $n=\overline{N}$ ($\phi_n$ is used instead of $\psi_n$ here for the always-valid approach since we are only considering one-sided tests). Note that the always-valid test is not a FPD because we stop on significance and not on confidence width.\footnote{One could use the always-valid confidence interval to construct a FPD, but, as shown in the simulations for the FWCID, it will have very poor properties with power essentially zero unless $N^{max}$ is set very high.}

We begin by studying bias in Figure \ref{fig:bias_test}. As expected, both the naive and conservative estimators are unbiased with homogeneous treatment effects (DGP 3 and 4) and approximately unbiased once the sample size gets larger (which is the case when $\tau_d$, and hence $\tau_{H_1}$, gets smaller) for the other DGPs. Also as expected, the GST is severely biased, whereas the always-valid test is somewhere in between.

\begin{figure}
    \centering
    \includegraphics{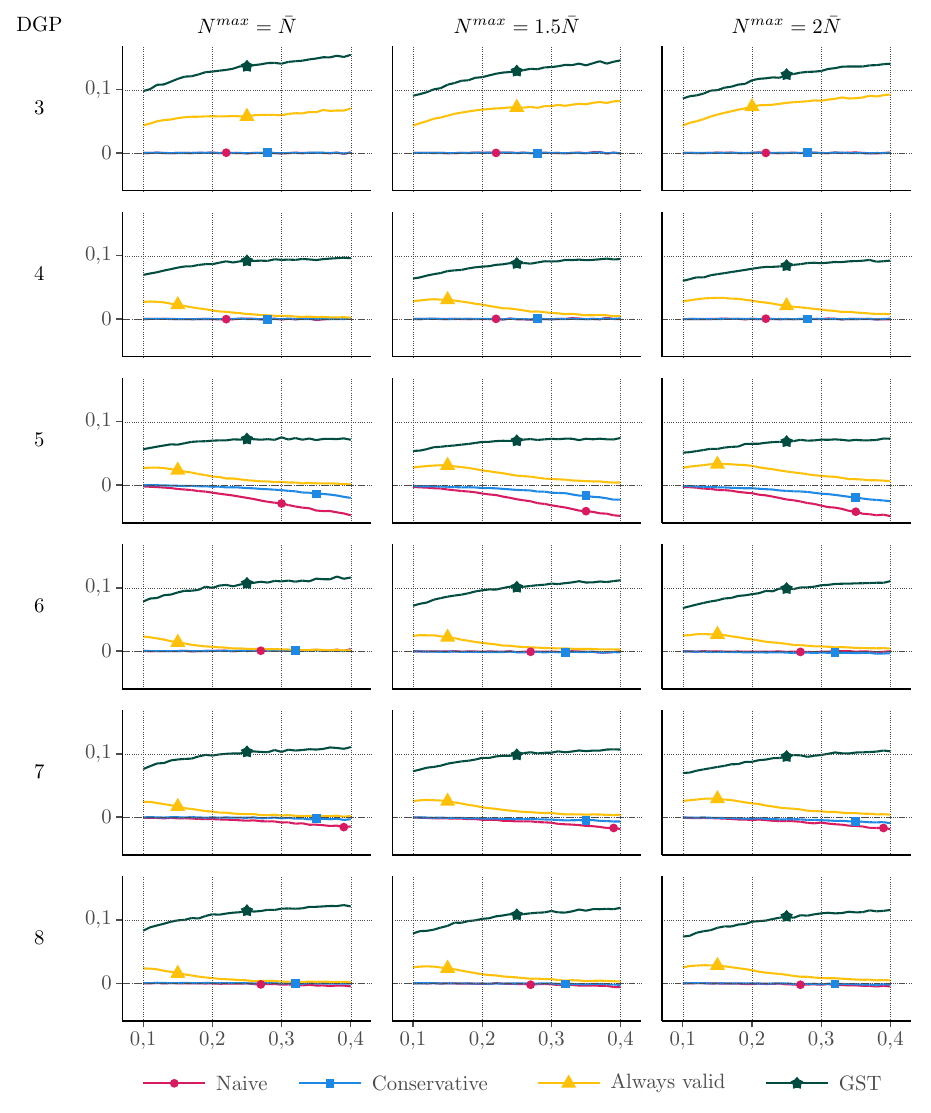}
    \caption{Bias ($y$-axis) for hypothesis test as a function of $\tau_{H_1}$ ($x$-axis)}
    \label{fig:bias_test} 
\end{figure}

Turning to power (Figure \ref{fig:power_test}), we first note that the results depend heavily on which $N^{max}$ that is chosen. For $N^{max}=\overline{N}$, the conservative estimator has low power because it is unlikely that the desired confidence width is achieved at the time of $n=N^{max}$. The always-valid approach performs better with a low $\tau_{H_1}$ (large sample size) because with some probability, the null will be rejected also with a relatively large confidence width because the treatment effect estimator is unexpectedly large (which also means the estimator is biased, see Figure \ref{fig:bias_test}). As expected the GST performs the best.

\begin{figure}
    \centering
    \includegraphics{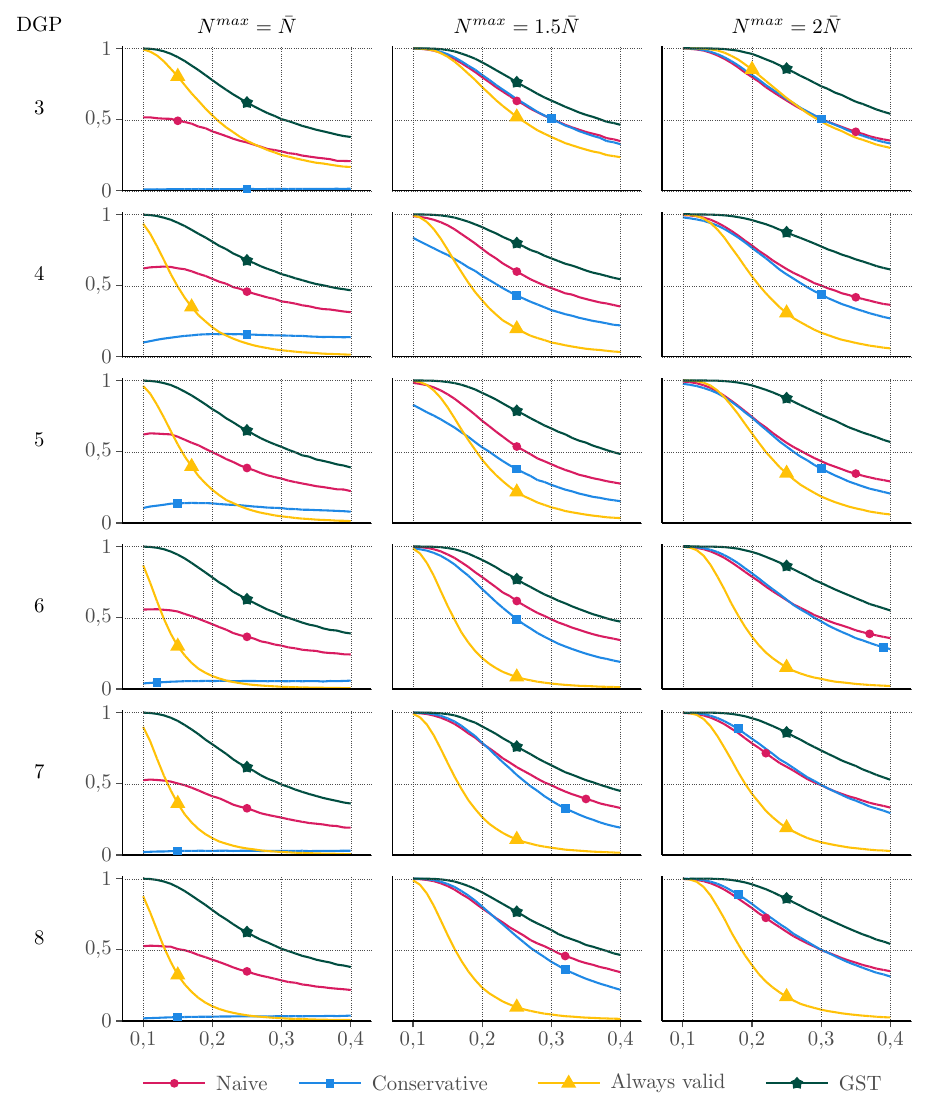}
    \caption{Power ($y$-axis) for hypothesis tests as a function of $\tau_{H_1}$ ($x$-axis)}
    \label{fig:power_test} 
\end{figure}

Note that, according to the theoretical result, the power of both the naive and conservative tests should be $1-\beta=.8$ when $\tau_d=\tau_{H_1}=\tau=.2$ as long as $N<N^{max}$. For $N^{max}=2\overline{N}$, we see that that indeed seems to be the case. The DGP with power furthest from .8 is DGP 5, where the power is around .74 for both the naive and conservative test which occurs because there are some simulations where $N=2\overline{N}$, because the distributions of $Y(0)$ and $Y(1)$ are very skewed.

Turning to the cases where $N^{max} > \overline{N}$, we see that the two FPDs perform better than the always-valid approach. The reason is that with these stopping rules, it is highly likely that the desired confidence width is achieved before $N^{max}$, and so the power will depend almost exclusively on whether the treatment effect estimate is large enough at that point. As expected, the GST continues to have the highest power throughout.

Finally, looking at the average sample size at which sampling is stopped (Figure \ref{fig:nratio_test}), we see that for $N^{max}=\overline{N}$, the conservative stopping rule essentially never binds (i.e., sampling almost never stops before $n=N^{max})$. The same is true for the always-valid approach for large $\tau_{H_1}$. GST generally stops earliest, although it depends on the DGP. For $N^{max}>\overline{N}$, the picture is a bit different. Here the naive stopping rule generally leads to the earliest stopping for large $\tau_{H_1}$ and even the conservative approach can stop relatively early. For small $\tau_{H_1}$, the GST once again stops earliest.

\begin{figure}
    \centering
    \includegraphics{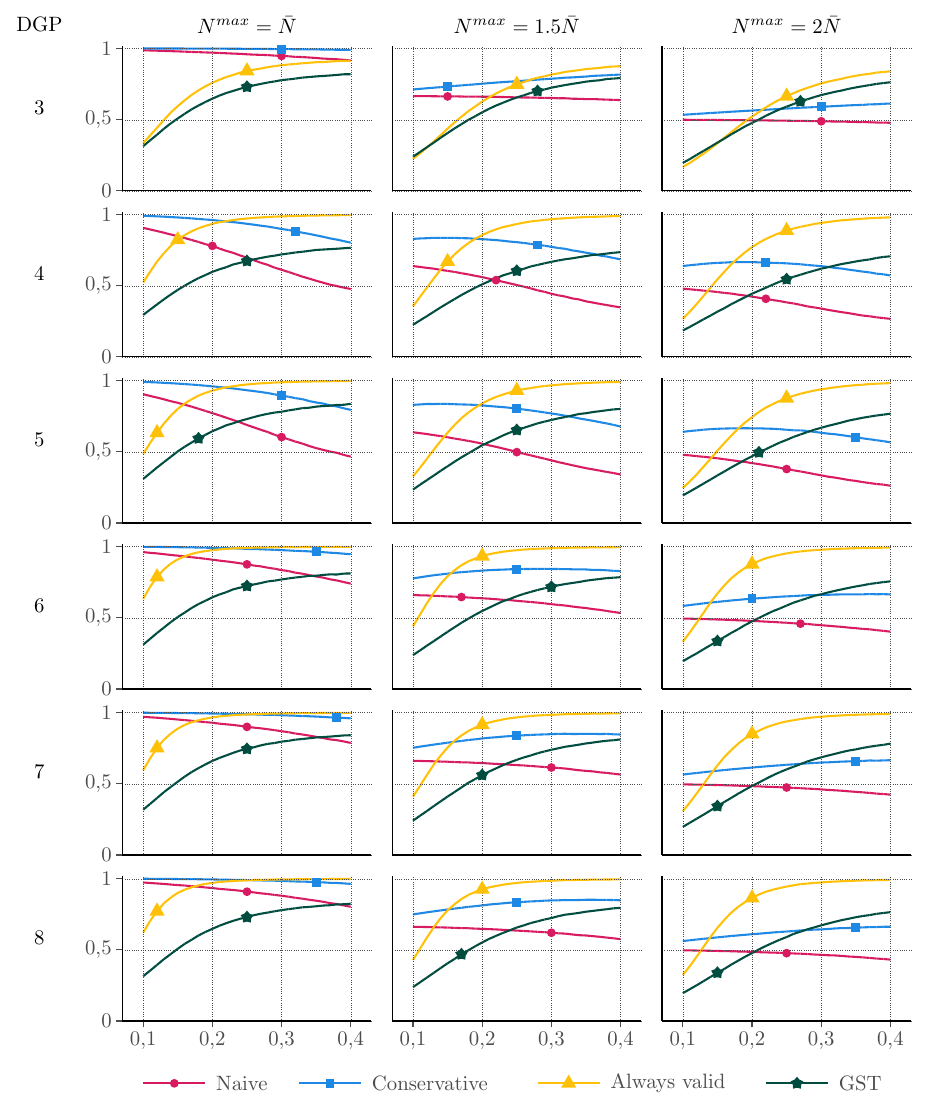}
    \caption{$E(N)/N^{max}$ ($y$-axis) for hypothesis tests as a function of $\tau_{H_1}$ ($x$-axis)}
    \label{fig:nratio_test} 
\end{figure}

\subsection{Conclusions from the simulations}
For forming a confidence interval of a fixed width---the FWCID---we compare three different methods: the naive, conservative and always-valid approaches. All three are asymptotically valid in the sense that coverage is guaranteed to be at least the desired level (and for the first two, it is asymptotically guaranteed to be exactly at that level). The finite-sample simulations indicate that the naive interval will generally have a coverage smaller than the desired level, whereas the conservative and always-valid will have a coverage larger than the desired level. In most cases, the bias is of minor importance, especially for relatively larger sample sizes (smaller confidence widths).

The big difference between the conservative and always-valid approaches comes when comparing the required sample sizes. The always-valid approach requires between two to seven times as much data as the conservative approach. Our conclusion is therefore that the conservative fixed-width confidence interval is the desirable approach since it gives asymptotic guarantees of both coverage and efficiency, while generally being conservative and at the same time not using too much data for small sample sizes. The naive approach will generally work well for large sample sizes, whereas it is hard to motivate the use of the always-valid confidence interval for FWCID.

For hypothesis testing, we compare the FPD for the naive and conservative versions with the group-sequential test and a always-valid test. The FPD has the advantage over the other two in that the point estimator will generally be less biased. However, they will generally be less powerful than the GST, whereas how efficiently they make use of data depends on the specific DGP and parameters used.



\section{Discussion}
In this paper we propose two sequential designs for randomized experiments where the experiment is stopped once a pre-specified precision of the estimate is reached. In the first design---the fixed-width confidence interval design (FWCID)---the experimenter specifies a confidence interval width and the experiment runs until the stopping rule is met, where the stopping rule is set such that the resulting confidence interval gives a coverage equal to the desired level. We propose two versions of this design, a naive and a conservative, where we prove that both versions give consistent estimators of the average treatment effect and that, asymptotically, the resulting interval has correct coverage and makes efficient use of the data.

The second design---the fixed-power design (FPD)---builds on the same idea of stopping once a given precision is reached, but the stopping rule is set such that a given power is reached while maintaining the size of the test. Just as for the FWCID, the treatment effect estimator is consistent and gives a valid confidence interval, and also comes in a naive and a conservative version. Both the FWCID and FPD allow for any randomization scheme (and even some deterministic ones) as long as the share that will be treated asymptotically is fixed. I.e., all standard randomization schemes, such as Bernoulli trials, biased coin designs and randomized block designs are allowed.

We contrast our proposed designs with traditional fixed-sample designs and other sequential designs. In fixed-sample designs (i.e., the sample size is fixed and decided in advance), the key consideration in the design stage is often to decide on a desired power. However, in our experience working with practitioners, power is often a difficult concept to grasp since it can be difficult to come up with a hypothetical treatment effect. Instead, we believe it can be pedagogically easier to focus on the desired precision of the estimator, such as with the FWCID (of course, this pedagogical point can be achieved also in traditional fixed-sample designs, but is perhaps made more explicit in a design explicitly targeting precision).

In addition, for a power analysis in a fixed-sample design to be performed, it is necessary to come up with estimates for the variances of the outcome under treatment and control. At companies like e.g. Spotify, for some experiments it is possible to approximate the variance under control using historical data on the outcome. However, in many settings the historical data is simply not similar enough to the outcome data in the experiment due to growing user bases and changing user behaviors. In addition, it can be difficult to come up with any good estimate for the variance under treatment if the treatment has never been seen historically.

With the FPD, the need to provide these variances are alleviated. However, this comes at a cost: The sample size is no longer fixed which means that there is a risk the experiment runs longer than what was originally planned. On the other hand, this is not necessarily a bug, but rather a feature, of the design. It is not uncommon that experimenters come up with optimistic values in their power calculations to show that it is worth running the experiment, leading to underpowered studies. With the FPD, this possibility is partly restricted (it is still possible to come up with an optimistic hypothetical treatment effect), forcing the experimenter to be more honest. Nevertheless, there is usually a limit to how long an experiment can run, and with the FPD (and FWCID) it is possible to, at any point during the experiment, get an estimate of the required sample size before the experiment concludes. If the decision to terminate an experiment early is based solely on the estimated required sample size, such a decision will not compromise the validity of results for experiments going the distance. In addition, for an experiment that is terminated early, traditional point estimates and confidence intervals will be valid. We stress that this strategy is only valid if early termination is based solely on determining that the estimated required sample size is too large. If the experimenter also base the decision on the treatment effect estimate, all bets are off.

Compared to the most commonly used sequential designs, the advantage of both the FWCID and FPD comes from the fact that the treatment effect estimator is consistent and that valid confidence intervals can be constructed without need for adjustments. The FWCID can be implemented using already established ``always-valid'' approaches for confidence sequences, although the drawback with such approaches is that they are typically less efficient compared to the designs we propose.

In this paper, we have only considered the case with one treatment and one control condition, as well as only studying the difference-in-means estimator. We believe that the main results of this paper could be extended to cover more general settings with more treatment conditions and many different estimators and estimands. We leave the study of such situations for future research.

\subsection*{Acknowledgement}
The authors thank Steve R. Howard for feedback and input to this paper. 
\subsection*{Replication code}
Replication code for the Monte Carlo simulations are available at \url{https://github.com/mattiasnordin/precision-based-designs}

\bibliography{ref.bib}

\begin{thebibliography}{}

\bibitem[Amrhein et~al., 2019]{amrhein_scientists_2019}
Amrhein, V., Greenland, S., and McShane, B. (2019).
\newblock Scientists rise up against statistical significance.
\newblock {\em Nature}, 567(7748):305.

\bibitem[Anscombe, 1952]{anscombe_large-sample_1952}
Anscombe, F.~J. (1952).
\newblock Large-sample theory of sequential estimation.
\newblock {\em Mathematical Proceedings of the Cambridge Philosophical
  Society}, 48(4):600--607.

\bibitem[Casella and Berger, 2021]{casella2021statistical}
Casella, G. and Berger, R.~L. (2021).
\newblock {\em Statistical inference}.
\newblock Cengage Learning.

\bibitem[Chow and Robbins, 1965]{chow_asymptotic_1965}
Chow, Y.~S. and Robbins, H. (1965).
\newblock On the {Asymptotic} {Theory} of {Fixed}-{Width} {Sequential}
  {Confidence} {Intervals} for the {Mean}.
\newblock {\em The Annals of Mathematical Statistics}, 36(2):457--462.

\bibitem[Fan et~al., 2004]{fan2004conditional}
Fan, X., DeMets, D.~L., and Lan, K.~G. (2004).
\newblock Conditional bias of point estimates following a group sequential
  test.
\newblock {\em Journal of Biopharmaceutical Statistics}, 14(2):505--530.

\bibitem[Gelman and Carlin, 2014a]{gelman2014}
Gelman, A. and Carlin, J. (2014a).
\newblock Beyond power calculations: Assessing type s (sign) and type m
  (magnitude) errors.
\newblock {\em Perspectives on Psychological Science}, 9(6):641--651.

\bibitem[Gelman and Carlin, 2014b]{gelman_beyond_2014}
Gelman, A. and Carlin, J. (2014b).
\newblock Beyond {Power} {Calculations} {Assessing} {Type} {S} ({Sign}) and
  {Type} {M} ({Magnitude}) {Errors}.
\newblock {\em Perspectives on Psychological Science}, 9(6):641--651.

\bibitem[Greenland, 2017]{greenland_for_2017}
Greenland, S. (2017).
\newblock For and {Against} {Methodologies}: {Some} {Perspectives} on {Recent}
  {Causal} and {Statistical} {Inference} {Debates}.
\newblock {\em European Journal of Epidemiology}, 32(1):3--20.

\bibitem[Howard et~al., 2021]{howard2021}
Howard, S.~R., Ramdas, A., McAuliffe, J., and Sekhon, J. (2021).
\newblock {Time-uniform, nonparametric, nonasymptotic confidence sequences}.
\newblock {\em The Annals of Statistics}, 49(2):1055 -- 1080.

\bibitem[Johari et~al., 2022]{johari2022always}
Johari, R., Koomen, P., Pekelis, L., and Walsh, D. (2022).
\newblock Always valid inference: Continuous monitoring of a/b tests.
\newblock {\em Operations Research}, 70(3):1806--1821.

\bibitem[Johari et~al., 2015]{johari2015always}
Johari, R., Pekelis, L., and Walsh, D.~J. (2015).
\newblock Always valid inference: Bringing sequential analysis to a/b testing.
\newblock {\em arXiv preprint arXiv:1512.04922}.

\bibitem[Lan and DeMets, 1983]{lan_discrete_1983}
Lan, K. K.~G. and DeMets, D.~L. (1983).
\newblock Discrete sequential boundaries for clinical trials.
\newblock {\em Biometrika}, 70(3):659--663.

\bibitem[Maharaj et~al., 2023]{maharaj2023anytime}
Maharaj, A., Sinha, R., Arbour, D., Waudby-Smith, I., Liu, S.~Z., Sinha, M.,
  Addanki, R., Ramdas, A., Garg, M., and Swaminathan, V. (2023).
\newblock Anytime-valid confidence sequences in an enterprise a/b testing
  platform.
\newblock In {\em Companion Proceedings of the ACM Web Conference 2023}, pages
  396--400.

\bibitem[Wald, 1945]{wald1945}
Wald, A. (1945).
\newblock Sequential tests of statistical hypotheses.
\newblock {\em The Annals of Mathematical Statistics}, 16(2):117--186.

\bibitem[Wasserstein and Lazar, 2016]{wasserstein_asa_2016}
Wasserstein, R.~L. and Lazar, N.~A. (2016).
\newblock The {ASA} {Statement} on p-{Values}: {Context}, {Process}, and
  {Purpose}.
\newblock {\em The American Statistician}, 70(2):129--133.

\bibitem[Wasserstein et~al., 2019]{wasserstein_moving_2019}
Wasserstein, R.~L., Schirm, A.~L., and Lazar, N.~A. (2019).
\newblock Moving to a {World} {Beyond} “p {\textless} 0.05”.
\newblock {\em The American Statistician}, 73(sup1):1--19.

\bibitem[Waudby-Smith et~al., 2024]{waudby2021time}
Waudby-Smith, I., Arbour, D., Sinha, R., Kennedy, E.~H., and Ramdas, A. (2024).
\newblock Time-uniform central limit theory and asymptotic confidence
  sequences.
\newblock {\em arXiv preprint arXiv:2103.06476v9}.

\end{thebibliography}

\end{document}